\def\namedlabel#1#2{\begingroup
#2%
\def\@currentlabel{#2}%
\phantomsection\label{#1}\endgroup
}
\theoremstyle{plain} 
\newtheorem{theorem}{Theorem}[section]
\newtheorem{corollary}[theorem]{Corollary}
\newtheorem{lemma}[theorem]{Lemma}
\newtheorem{proposition}[theorem]{Proposition}
\newtheorem{notation}[theorem]{Notation}
\newtheorem{theoremx}{Theorem}
\theoremstyle{definition} 
\newtheorem{definition}[theorem]{Definition}
\newtheorem{remark}[theorem]{Remark}
\numberwithin{equation}{subsection}
\renewcommand{\(}{\left(}
\renewcommand{\)}{\right)}
\newcommand{\NN}{\mathbb{N}}
\newcommand{\RR}{\mathbb{R}}
\newcommand{\ZZ}{\mathbb{Z}}
\newcommand{\QQ}{\mathbb{Q}}
\newcommand{\FF}{\mathbb{F}}
\newcommand{\mm}{\mathfrak{m}}
\newcommand{\nn}{\mathfrak{n}}
\newcommand{\pp}{\mathfrak{p}}
\newcommand{\aaa}{\mathfrak{a}}
\newcommand{\bb}{\mathfrak{b}}
\newcommand{\IN}{\operatorname{in}}
\newcommand{\m}{\mathfrak{m}}
\newcommand{\HOM}{\operatorname{hom}}
\newcommand{\jg}{\mathcal{J}_G}
\newcommand{\FRAC}{\operatorname{frac}}
\def\@tocline#1#2#3#4#5#6#7{\relax
  \ifnum #1>\c@tocdepth 
  \else
    \par \addpenalty\@secpenalty\addvspace{#2}%
    \begingroup \hyphenpenalty\@M
    \@ifempty{#4}{%
      \@tempdima\csname r@tocindent\number#1\endcsname\relax
    }{%
      \@tempdima#4\relax
    }%
    \parindent\z@ \leftskip#3\relax \advance\leftskip\@tempdima\relax
    \rightskip\@pnumwidth plus4em \parfillskip-\@pnumwidth
    #5\leavevmode\hskip-\@tempdima
      \ifcase #1
       \or\or \hskip 1.9em \or \hskip 2em \else \hskip 3em \fi%
      #6\nobreak\relax
    \dotfill\hbox to\@pnumwidth{\@tocpagenum{#7}}\par
    \nobreak
    \endgroup
  \fi}
\newcommand{\Hom}{\operatorname{Hom}}
\newcommand{\fpt}{\operatorname{fpt}}
\newcommand{\ini}{\operatorname{In}}
\newcommand{\reg}{\operatorname{reg}}
\newcommand{\chara}{\operatorname{char}}
\definecolor{blue-violet}{rgb}{0.54, 0.17, 0.89}
\definecolor{Blue}{rgb}{0.01, 0.28, 1.0}
\definecolor{gGreen}{rgb}{0.2, 0.8, 0.2}
\definecolor{Green}{rgb}{0.04, 0.85, 0.32}
\begin{document}

\title{Gorenstein Binomial Edge Ideals}

\author[René González-Martínez]{René González-Martínez}
\address{Universidad Nacional Autónoma de México, Ciudad de México, México}
\email{jrene@ciencias.unam.mx}

\thanks{{$^1$}The author was partially supported by the CONACyT Grants 433381 and 284598 }

\subjclass[2010]{Primary 	05E40,	13D07, 05C75, 16W50, 13H10; Secondary 13A35.}
\keywords{Binomial Edge Ideal; Gorenstein ideal; Graded rings; Initial ideals; $F$-pure thresholds.}
\begin{abstract}
We classify connected graphs $G$ whose binomial edge ideal is Gorenstein. The proof uses methods in prime characteristic.
\end{abstract}
\maketitle

\tableofcontents

\section*{Introduction}
Our main goal is to present homological properties of binomial edge ideals. These ideals are a generalization of determinantal ideals and ideals generated by adjacent 2-minors in a $2\times n$ generic matrix. The binomial edge ideals were introduced by Herzog, Hibi, Hreindóttir, Kahle, and Rauh \cite{HHHKR10}  and by Ohtani \cite{O11} independently and about the same time. 

Let $G$ a simple graph (i.e. $G$ has no loops and multiple edges) on the vertex set $V(G)=[n]:=\{1,\ldots,n\}$ with edge set $E(G)$. Let $S=K[x_1,\ldots,x_n,y_1,\ldots,y_n]$ be the polynomial ring on $2n$ variables over a field $K$. The \emph{binomial edge ideal} $\jg$ of $G$ is 
	\begin{equation*}
		\jg:=(f_{ij}\, |\, \{i,j\}\in E(G)\,i<i),
	\end{equation*}	 
	where $f_{ij}=x_iy_j-x_jy_i$ for $i<j$.
The properties of binomial edge ideals have been studied vastly by many researchers, for instance:
\begin{itemize}
	\item Cohen-Macaulayness \cite{BN17,Bipartite,EHH11,HHHKR10,KM15,MR18,RR14,R13,R17,Z12},
	\item Betti numbers and regularity \cite{B16,CDI16,dAH17,EZ15,KM14,KM16,MM13,KM12,KM17,PZ14},
	\item Gr\"{o}bner bases \cite{BBS16,CR11,HHHKR10,O11}.
\end{itemize}
	Herzog et al. characterized the graphs whose binomial edge ideal has quadratic Gr\"{o}bner base. 
		For a graph $G$, the generators $f_{ij}$ of $\jg$ form a quadratic Gr\"{o}bner basis if and only if for all edges $\{i, j\}$ and $\{k, l\}$ with $i < j$ and $k < l$ one has $\{j, l\}\in E(G)$ if $i = k$, and $\{i, k\}\in E(G)$ if $j = l$ \cite[Theorem 1.1]{HHHKR10}.
	A graph $G$ that satisfies the aforementioned condition is called \emph{closed with respect to the given labelling of the vertices}. We say that a graph $G$ is \emph{closed} if there exists a labeling of its vertices such that $G$ is closed with respect to that labeling.
	
	Ene, Herzog and Hibi proved that if $G$ is a closed graph, then  $S/\jg$ is Gorenstein if and only if $G$ is a path \cite[Corollary 3.4]{EHH11}.	This motivated us to prove the main result of this paper. 
	
	\begin{theoremx}
		Let $G$ be a connected graph such that $S/\jg$ is Gorenstein. Then, $G$ is a path.
	\end{theoremx}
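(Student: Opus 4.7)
The plan is to exploit the squarefree Gr\"{o}bner degeneration of $\jg$ established in \cite{HHHKR10}, together with methods in prime characteristic, to transfer the Gorenstein hypothesis from $S/\jg$ to a Stanley--Reisner quotient that can then be analyzed combinatorially.

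By \cite[Theorem 1.1]{HHHKR10}, the ideal $\jg$ admits an initial ideal $\ini_{<}(\jg)$, with respect to a suitable lexicographic order adapted to the labeling of $V(G)$, which is squarefree. In any positive characteristic, the Stanley--Reisner ring $S/\ini_{<}(\jg)$ is $F$-pure. One may then appeal to the squarefree Gr\"{o}bner degeneration theorem of Conca--Varbaro (whose proof proceeds through $F$-splittings in prime characteristic) to conclude that the extremal Betti numbers of $S/\jg$ and $S/\ini_{<}(\jg)$ coincide. In particular, Cohen--Macaulayness and the Cohen--Macaulay type are preserved, so $S/\jg$ Gorenstein forces $S/\ini_{<}(\jg)$ Gorenstein as well. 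If needed, a standard spreading-out argument reduces the arithmetic hypothesis on $K$ to positive characteristic, since $\jg$ is defined over $\ZZ$ and the Gorenstein property of $S/\jg$ is stable under generic reduction modulo $p$.

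Having reduced to the Stanley--Reisner side, we study the simplicial complex $\Delta(G)$ whose facets admit a combinatorial description in terms of $G$ and correspond to the minimal primes of $\jg$, which are indexed by certain vertex cutsets. Stanley's criterion for the Gorenstein property translates into the condition that, after passing to the core, $\Delta(G)$ must be Cohen--Macaulay with all codimension-one links being spheres. A combinatorial analysis of this Gorenstein${}^{*}$ condition shows it is highly restrictive: it forces every vertex of $G$ to have degree at most two, and rules out cycles, so that $G$, being connected, must be a path. The main obstacle is making the link-by-link Gorenstein${}^{*}$ analysis on $\Delta(G)$ sufficiently explicit for graphs that are not closed, where the description of facets is more involved; a convenient intermediate reduction is to first prove that $G$ must be closed, after which the conclusion follows directly from the Ene--Herzog--Hibi classification \cite[Corollary 3.4]{EHH11}. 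The characteristic $p$ input is essential precisely at the transfer step and is what allows us to drop the closedness assumption imposed in \cite{EHH11}.
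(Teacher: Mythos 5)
Your plan hinges on transferring the Gorenstein hypothesis across the squarefree Gr\"obner degeneration, and that step has a genuine gap. The Conca--Varbaro theorem you invoke gives that the \emph{extremal} Betti numbers of $S/\jg$ and $S/\ini_<(\jg)$ coincide; this controls depth and regularity (hence Cohen--Macaulayness), but it does \emph{not} control the Cohen--Macaulay type, which is the sum of all Betti numbers in the last homological degree, not just the extremal corner. So ``$S/\jg$ Gorenstein forces $S/\ini_<(\jg)$ Gorenstein'' does not follow from the cited result, and Gorensteinness is not known to pass to squarefree initial ideals in general; no substitute argument is offered. Moreover, the combinatorial half of the plan is only a sketch: the facets of the initial complex of $\jg$ for a non-closed graph are governed by the admissible-path generators of $\ini_<(\jg)$ (Theorem \ref{mainresult}), not by the cutset description of the minimal primes of $\jg$ itself, and the asserted conclusion that the Gorenstein$^*$ condition forces maximum degree two and no cycles is exactly the hard content, which you explicitly flag as an obstacle rather than prove. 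The fallback ``first prove $G$ is closed'' is likewise unsupported and appears to be as difficult as the original statement.

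For contrast, the paper uses the squarefree initial ideal only to conclude that $S/\jg$ is $F$-injective (Theorem \ref{ThmFinj}); combined with the Gorenstein hypothesis this yields $F$-purity by Fedder's lemma, and then the formula $\reg(S/\jg)=\dim(S/\jg)-\fpt(S/\jg)$ for $F$-pure Gorenstein graded rings (Theorem \ref{prop1}), together with $\fpt(S/\jg)\le\fpt(S/\mathcal{J}_{K_n})=2$ (Theorem \ref{comp}) and $\dim(S/\jg)=n+1$, gives $\reg(S/\jg)\ge n-1$; this contradicts Theorem \ref{theo1} unless $G$ is a path, with characteristic zero handled by reduction mod $p\gg 0$ as you suggest. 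The crucial difference is that this route never needs the initial ideal to be Gorenstein, only the numerical regularity bound, which is precisely what your degeneration step cannot deliver. To salvage your approach you would need either a proof that squarefree Gr\"obner degenerations preserve type one in this setting, or an independent argument bounding the regularity (or classifying the initial complexes), neither of which is present.
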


	This is achieved through the applications of methods in prime characteristic, in particular, $F$-pure thresholds \cite{TW04}. Along the way we compute the $F$-pure threshold of binomial edge ideals associated to closed graphs and show that the $F$-pure threshold of the binomial edge ideal coincide with the $F$-pure threshold of the initial ideal of the binomial edge ideal for closed graphs. This follows the line of research which establish that the binomial edge ideal and its initial ideal have similar properties for closed graphs \cite{dAH17, EHH11}.

\section{Background}\label{background}
In this section we recall some notions and known facts regarding binomial edge ideals. In this paper, all graphs are simple. 
\begin{definition}
Let $G$ be a simple graph on $[n]$, and let $i$ and $j$ be two vertices of $G$ with $i<j$.
	A path $i=i_0,i_1,\ldots,i_r=j$ from $i$ to $j$
	is called {\em admissible}, if
	\begin{enumerate}
		\item[(i)] $i_k\neq i_\ell$  for $k\neq \ell$;
		\item[(ii)] for each $k=1,\ldots,r-1$ one has either $i_k<i$ or $i_k>j$;
		\item[(iii)] for any proper subset $\{j_1,\ldots,j_s\}$	of $\{i_1,\ldots,i_{r-1}\}$, the sequence $i,j_1,\ldots,j_s,j$ is not a path.
	\end{enumerate}
	Given an admissible path \[\pi: i=i_0,i_1,\ldots,i_r=j\] from $i$ to $j$, where $i < j$, we associate the monomial
	\begin{equation*}
		u_{\pi}=\(\prod_{i_k>j}x_{i_k}\)	\(\prod_{i_\ell<i}y_{i_\ell}\).
	\end{equation*}
\end{definition}

We now recall a characterization of the Gr\"obner basis for binomial edge ideals. 
	\begin{theorem}[{\cite[Theorem 2.1]{HHHKR10}\label{mainresult}}]
		Let $G$ be a simple graph on $[n]$, and let $<$ be the lexicographic order on $S=K[x_1,\ldots,x_n,y_1,\ldots,y_n]$ induced by $x_1>x_2>\cdots >x_n>y_1>y_2>\cdots >y_n$. Then the set of binomials 
		\begin{equation*}
			\mathcal{G}= \bigcup_{i<j} \,\{\,u_{\pi}f_{ij}\,:\;\text{$\pi$ is an admissible path from $i$ to $j$}\,\}
		\end{equation*}
is a reduced Gr\"obner basis of $\jg$.
	\end{theorem}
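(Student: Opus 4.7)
The plan is to apply Buchberger's S-pair criterion for Gröbner bases, combined with a combinatorial analysis of admissible paths. I would proceed in four steps.

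\textbf{Step 1 (Each $u_\pi f_{ij}$ lies in $\jg$).} By induction on the length $r$ of the admissible path $\pi : i = i_0, i_1, \ldots, i_r = j$. For $r = 1$ the statement is immediate since $u_\pi = 1$ and the binomial is $f_{ij}$ itself. For $r > 1$, I would use a telescoping identity: $f_{ij}$ multiplied by an appropriate monomial can be written as an $S$-linear combination of $f_{i_0 i_1}$ and $u_{\pi'} f_{i_1 j}$, where $\pi'$ is the sub-path $i_1, \ldots, i_r$. The monomial $u_\pi$ precisely encodes condition (ii): vertices of $\pi$ less than $i$ contribute $y$-factors, and those greater than $j$ contribute $x$-factors, absorbing all denominators in the decomposition.

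\textbf{Step 2 (Initial terms).} For $i < j$, $\ini_<(f_{ij}) = x_i y_j$, so $\ini_<(u_\pi f_{ij}) = u_\pi x_i y_j$. By admissibility condition (i) this monomial is squarefree, and by condition (iii) distinct admissible paths from $i$ to $j$ produce monomials $u_\pi x_i y_j$ with no divisibility relations between them.

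\textbf{Step 3 (S-pair reduction).} Given $g = u_\pi f_{ij}$ and $g' = u_{\pi'} f_{kl}$ in $\mathcal{G}$, I compute $S(g,g')$. If $\gcd(\ini_<(g),\ini_<(g'))=1$, Buchberger's product criterion yields the reduction. Otherwise there is at least one shared variable, and I would split into cases according to whether the overlap involves the index pairs $\{i,j\}$ and $\{k,l\}$, the path monomials $u_\pi$ and $u_{\pi'}$, or both. The central combinatorial fact in each case is that the overlap produces, after cancellation in $S(g,g')$, terms divisible by $\ini_<(u_{\pi''} f_{i'j'})$ for some new admissible path $\pi''$ constructed by concatenating and then pruning subpaths of $\pi$ and $\pi'$; condition (iii) on $\pi''$ is what lets the pruning terminate.

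\textbf{Step 4 (Reducedness).} By Step 2 no leading term of an element of $\mathcal{G}$ divides the leading term of another. It remains to check that the trailing monomial $u_\pi x_j y_i$ of $u_\pi f_{ij}$ is not divisible by any $\ini_<(u_{\pi'} f_{kl}) = u_{\pi'} x_k y_l$; such a divisibility would exhibit a shortcut admissible path contradicting the minimality built into condition (iii).

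The principal obstacle is Step 3: the case analysis required to exhibit the reducing admissible path $\pi''$ for each overlap pattern. The admissibility conditions (i)--(iii) are engineered precisely so that any overlap of two admissible paths yields another admissible path, but extracting $\pi''$ requires careful bookkeeping of how the interior vertices of each path compare, under the lex order, to the four indices $i, j, k, l$, especially when an endpoint of one path coincides with an interior vertex of the other.
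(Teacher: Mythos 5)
This theorem is not proved in the paper at all: it is quoted verbatim from \cite[Theorem 2.1]{HHHKR10}, so there is no in-paper argument to compare against. Your outline does follow the same strategy as the original source (membership of each $u_\pi f_{ij}$ in $\jg$, then Buchberger's criterion, then reducedness), but as a proof it has two genuine gaps. First, in Step 1 the induction does not close as stated: the truncated path $\pi' : i_1,\ldots,i_r=j$ of an admissible path $\pi$ need not be admissible, nor even satisfy condition (ii), with respect to its new endpoints (an interior vertex $v<i$ may satisfy $i_1<v<i$, so it is neither below $\min(i_1,j)$ nor above $\max(i_1,j)$), so the inductive hypothesis does not apply to $u_{\pi'}f_{i_1 j}$. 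The fix is to prove the stronger statement that $u\,f_{ij}\in\jg$ for an \emph{arbitrary} path with distinct vertices, where $u$ carries $x_v$ for each interior vertex $v>j$, $y_v$ for each $v<i$, and either variable when $i<v<j$; the Pl\"ucker-type identities $x_v f_{ij}=x_j f_{iv}-x_i f_{jv}$ and $y_v f_{ij}=y_i f_{vj}-y_j f_{vi}$ then make the induction go through, and condition (iii) is only used afterwards to replace such a product by a monomial multiple of some $u_\sigma f_{ij}$ with $\sigma$ admissible.

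Second, and more seriously, Step 3 is the entire content of the theorem and you have only declared an intention to carry it out. The assertion that every overlapping $S$-pair reduces via a single ``concatenate-and-prune'' admissible path $\pi''$ is not established and is not literally what happens: in the source the reduction of $S(u_\pi f_{ij}, u_{\pi'}f_{kl})$ typically requires a standard expression involving several members of $\mathcal{G}$, and the case analysis (endpoint coinciding with endpoint, endpoint with interior vertex, interior with interior, in each of the possible order relations among $i,j,k,l$) occupies several pages. Until that analysis is written out, the proposal is a correct plan rather than a proof. A small remark on Step 4: reducedness of the trailing terms is actually easier than you suggest and needs no appeal to condition (iii) --- the $x$-variables of $u_\pi x_j y_i$ all have index $\geq j$ and its $y$-variables all have index $\leq i$, so a leading term $u_{\pi'}x_k y_l$ with $k<l$ dividing it would force $k\geq j>i\geq l$, a contradiction; condition (iii) is only needed to rule out divisibility among leading terms coming from two admissible paths with the same endpoints.
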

Since $\jg$ has a square-free Gr\"obner basis, we conclude that $\jg$ is a radical ideal \cite[Corollary 2.2]{HHHKR10}.	
	
\begin{remark}\label{bin}
	Every monomial $u_{\pi}x_iy_j$ such that $\pi$ is an admissible path from $i$ to $j$ is the initial term of an element of $\mathcal{G}$. From the fact that $\mathcal{G}$ is a reduced Gr\"{o}bner basis, we conclude that the set $\ini(\mathcal{G})$ is the minimal generating set of the ideal $\ini(\jg)$.
\end{remark}	

	The binomial edge ideal of a path $P_n$ with $n$ vertices is a complete intersection having $n-1$ generators of degree 2 and $\reg(S/\mathcal{J}_{P_n}) = n-1$ \cite{EZ15}. The following results state that regularity $n-1$ implies that the graph is a path.
	
	\begin{theorem}[{\cite[Theorem 1.1]{MM13}\label{regbei}}]
		Let $G$ be a graph on the set $[n]$ of vertices. Then 
		\begin{equation*}
			\reg(S/\jg)\leq n-1
		\end{equation*}
	\end{theorem}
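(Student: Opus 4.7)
The plan is to reduce to the squarefree monomial case. Since Gröbner degeneration is flat and Betti numbers are upper semicontinuous, one has $\reg(S/\jg) \leq \reg(S/\IN(\jg))$ for any term order; combined with Theorem~\ref{mainresult} (and Remark~\ref{bin}), it suffices to show $\reg(S/I) \leq n-1$ where $I := \IN(\jg)$ is the squarefree monomial ideal minimally generated by the set $\{u_\pi x_i y_j : \pi \text{ an admissible path from } i \text{ to } j,\ i<j\}$.

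I would then proceed by induction on $n$. The base case $n=1$ is immediate, since then $\jg = (0)$ and $\reg(S/\jg)=0$. For the inductive step, I would work with the largest vertex $n$ and use the short exact sequence
\begin{equation*}
0 \to (S/(I : y_n))(-1) \to S/I \to S/(I + (y_n)) \to 0,
\end{equation*}
which yields
\begin{equation*}
\reg(S/I) \leq \max\bigl\{\reg(S/(I : y_n)) + 1,\ \reg(S/(I + (y_n)))\bigr\}.
\end{equation*}
A key combinatorial observation is that $y_n$ can appear in a minimal generator $u_\pi x_i y_j$ of $I$ only as the $y_j$-factor with $j=n$: indeed, a factor $y_{i_\ell}$ in $u_\pi$ requires $i_\ell < i$, so $n$ cannot be an interior vertex of an admissible path (which would force $i > n$). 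Therefore the colon $I:y_n$ is generated by all $u_\pi x_i y_j$ for admissible paths from $i$ to $j<n$, together with the degree-lowered monomials $u_\pi x_i$ coming from admissible paths from $i$ to $n$; and $I+(y_n)$ is simply generated by the $u_\pi x_i y_j$ with $j<n$ and by $y_n$.

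Both quotients should be identified, up to a harmless linear form, with initial ideals of binomial edge ideals associated to graphs on at most $n-1$ vertices, modulo the difficulty that the colon ideal introduces lower-degree monomials that are not binomial edge initial generators on the nose. Applying the inductive hypothesis then delivers the bound. The main obstacle is this last identification: $(I:y_n)$ is not literally the initial ideal of the binomial edge ideal of $G-\{n\}$, and one needs either a stronger inductive statement that accommodates these extra monomials (for instance, by inducting on a class of ideals closed under the colon/quotient operations performed) or an auxiliary lemma giving an independent bound on $\reg(S/(I:y_n))$, perhaps via Hochster's formula applied to the Stanley-Reisner complex, whose combinatorics are controlled by the admissible-path structure of $G$.
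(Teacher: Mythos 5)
The paper does not prove this statement at all: it is quoted from Matsuda--Murai \cite[Theorem 1.1]{MM13}, so your proposal has to stand on its own, and as written it has a genuine gap at precisely the point you flag. The preliminary reductions are fine: $\reg(S/\jg)\le\reg(S/\IN(\jg))$ by semicontinuity of Betti numbers under Gr\"obner degeneration, the short exact sequence $0\to (S/(I:y_n))(-1)\to S/I\to S/(I+(y_n))\to 0$ gives $\reg(S/I)\le\max\{\reg(S/(I:y_n))+1,\ \reg(S/(I+(y_n)))\}$, and it is true that $y_n$ divides a minimal generator $u_\pi x_iy_j$ only when $j=n$. (Your parenthetical justification is however overstated: $n$ \emph{can} be an interior vertex of an admissible path from $i$ to $j$ when $j<n$; it then contributes $x_n$, not $y_n$, to $u_\pi$ --- e.g.\ the path $1,3,2$ in the graph with edges $\{1,3\},\{2,3\}$ gives the generator $x_1x_3y_2$.) But the induction is never closed. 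To conclude you need $\reg(S/(I:y_n))\le n-2$ and $\reg(S/(I+(y_n)))\le n-1$, and neither ideal is covered by the inductive hypothesis: $(I:y_n)$ contains the degree-lowered monomials $u_\pi x_i$ from paths ending at $n$, and both $(I:y_n)$ and $I+(y_n)$ retain generators $u_\pi x_iy_j$ with $j<n$ whose paths pass through $n$ and hence involve $x_n$; so neither is $\IN(\mathcal{J}_{G-n})$ (extended, possibly plus $y_n$), nor is it obviously bounded by it. Since colon ideals can in general have strictly larger regularity than the ideal itself, the inequality $\reg(S/(I:y_n))\le n-2$ is exactly the content that must be established; proposing ``a stronger inductive statement'' or ``Hochster's formula'' without formulating or proving either leaves the theorem unproved.

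So this is a reasonable opening strategy --- Matsuda and Murai's cited proof also exploits the squarefree initial ideal --- but the combinatorial heart of the argument (controlling the regularity of the two pieces of the $y_n$-sequence, or of whatever class of monomial ideals your strengthened induction would range over) is missing, and it is not a routine verification: it is where all the work lies.
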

	\begin{theorem}[{\cite[Theorem 3.4]{KM16}\label{theo1}}]
		Let $G$ be a graph on $[n]$ which is not a path. Then $\reg(S/\jg)\leq n-2$
	\end{theorem}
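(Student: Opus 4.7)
The plan is to proceed by induction on $n$. The disconnected case is immediate: if $G=G_1\sqcup\cdots\sqcup G_c$ with $c\ge 2$, then $S/\jg\cong\bigotimes_i S_i/\mathcal{J}_{G_i}$ over $K$ (since the component ideals live in disjoint variable sets), so $\reg(S/\jg)=\sum_i\reg(S_i/\mathcal{J}_{G_i})\le\sum_i(n_i-1)=n-c\le n-2$, using Theorem~\ref{regbei} on each component. The base case is $n=3$, where the only connected non-path graph is $K_3$; its binomial edge ideal is the ideal of $2\times 2$ minors of a generic $2\times 3$ matrix, with $\reg(S/\mathcal{J}_{K_3})=1=n-2$.

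For the inductive step I would split connected non-path graphs $G$ into two structural families according to maximum degree. If every vertex of $G$ has degree at most $2$, then $G$ is the cycle $C_n$, and I would compute $\reg(S/\mathcal{J}_{C_n})$ directly, either from its primary decomposition or by cutting one edge $e$ to reduce to the path $C_n-e=P_n$. Otherwise some vertex has degree at least $3$; here I would pick either a leaf $w$ of $G$ with unique neighbor $u$, or (if $G$ has no leaf) an edge $e=\{u,w\}$ lying on a cycle so that $G-e$ remains connected. In either case, writing $I=\mathcal{J}_{G\setminus w}$ or $\mathcal{J}_{G-e}$ respectively, one has $\jg=I+(f_{uw})$, and the short exact sequence
\[
0\to\bigl(S/(I:f_{uw})\bigr)(-2)\xrightarrow{\,\cdot f_{uw}\,} S/I\to S/\jg\to 0
\]
gives $\reg(S/\jg)\le\max\{\reg(S/I),\,\reg(S/(I:f_{uw}))+1\}$. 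When the smaller graph underlying $I$ is still not a path, induction yields $\reg(S/I)\le n-3$, and one identifies $I:f_{uw}$ combinatorially -- typically as the binomial edge ideal of a graph obtained from the smaller graph by adding a few edges among neighbors of $u$ or $w$ -- to bound the colon term by $n-3$ as well.

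The principal obstacle is the borderline case in which every allowable reduction turns $G$ into a path, so that the inductive hypothesis only delivers $\reg(S/I)\le n-2$, which is too weak. Handling these \emph{nearly-path} graphs requires either a sharper analysis of the colon ideal $I:f_{uw}$ in that setting, or a direct case-by-case treatment based on the block decomposition (cut-vertex structure) of $G$; this is where the real work will concentrate.
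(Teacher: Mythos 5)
This statement is not proved in the paper at all: it is quoted from \cite[Theorem 3.4]{KM16}, so there is no internal argument to compare yours against. Judged on its own terms, what you have written is a plan rather than a proof, and the plan has two concrete gaps. First, the identification of the colon ideal is not correct as stated: for an edge $e=\{u,w\}$ one does \emph{not} in general have that $\mathcal{J}_{G\setminus e}:f_{uw}$ is the binomial edge ideal of a graph obtained by adding edges among the neighbors of $u$ and $w$. By the results of Mohammadi and Sharifan on these colon ideals, $\mathcal{J}_{G\setminus e}:f_{uw}$ contains, besides the binomial edge ideal of the augmented graph, extra generators attached to paths joining $u$ and $w$ in $G\setminus e$; these are not binomials $f_{ij}$, and bounding the regularity of the resulting ideal is precisely the hard step, not a routine application of the inductive hypothesis. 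Second, the cases you flag yourself --- the cycle $C_n$, and the ``nearly-path'' graphs where every admissible reduction produces a path --- are exactly where the content of the theorem lies, and they are left as ``I would compute'' or ``requires a sharper analysis.'' Since the target bound $n-2$ is attained (e.g.\ by a path with one pendant edge added, or by $C_n$), there is no slack to absorb a crude estimate of the colon term, so these cases cannot be waved through.

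The exact sequence you use and the bound $\reg(S/\jg)\leq\max\{\reg(S/I),\reg(S/(I:f_{uw}))+1\}$ are fine, the disconnected case via the tensor-product decomposition is fine, and the base case $n=3$ is fine; the skeleton is a reasonable way to start. But the published proof in \cite{KM16} is a genuinely more involved induction (using structural results on cut vertices and on the regularity of binomial edge ideals of proper subgraphs, together with a careful treatment of exactly the borderline graphs you set aside), and until you supply the analysis of $\mathcal{J}_{G\setminus e}:f_{uw}$ and the cycle and nearly-path cases, your argument does not establish the theorem.
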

	We now recall the Pl{\"u}cker relation for binomials. This plays an important role while we study 
	the $F$-pure threshold for $S/\jg$.
	\begin{proposition}[Pl{\"u}cker relation]
		Let  $i<j<k<l$  be positive integers. Then,
\begin{equation*}
			f_{ij}f_{kl}-f_{ik}f_{jl}+f_{il}f_{jk}=0.
\end{equation*}
	\end{proposition}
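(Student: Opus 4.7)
The identity is a purely formal polynomial equality with no graph-theoretic content, so the plan is simply to verify it by a well-organized computation rather than invoke any of the machinery developed in the background section.

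The cleanest conceptual route is to recognize $f_{ab} = x_a y_b - x_b y_a$ as the $2\times 2$ minor on columns $a,b$ of the generic matrix
\[
X = \begin{pmatrix} x_i & x_j & x_k & x_l \\ y_i & y_j & y_k & y_l \end{pmatrix},
\]
and then read the Plücker relation as the Laplace expansion (along the first two rows) of the $4\times 4$ determinant
\[
\det \begin{pmatrix} X \\ X \end{pmatrix},
\]
which vanishes because it has two pairs of repeated rows. The Laplace expansion of this determinant along rows $\{1,2\}$ produces precisely
\[
f_{ij}f_{kl} - f_{ik}f_{jl} + f_{il}f_{jk},
\]
with the signs $+,-,+$ coming from the standard shuffle signs of the column partitions $\{i,j\}|\{k,l\}$, $\{i,k\}|\{j,l\}$, $\{i,l\}|\{j,k\}$. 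So I would state this observation and conclude.

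If one prefers to avoid any appeal to Laplace expansion, the alternative is simply to expand both sides: write out the four monomials in each of $f_{ij}f_{kl}$, $f_{ik}f_{jl}$, $f_{il}f_{jk}$, collect them into the six possible monomials $x_ax_b y_c y_d$ with $\{a,b\},\{c,d\}$ a partition of $\{i,j,k,l\}$, and check that each such monomial appears with total coefficient zero. This is a twelve-term cancellation and is wholly mechanical.

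There is no real obstacle here. The only minor pitfall is sign bookkeeping, since $f_{ab}$ is antisymmetric in its indices, so one must be disciplined about keeping the index order $i<j<k<l$ fixed throughout the expansion. I would carry out the Laplace-expansion version in the paper, since it makes transparent why the identity is stable under relabelings and why the three-term alternating sum is the natural object.
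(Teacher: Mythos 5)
Your verification is correct, but note that the paper itself offers no proof of this proposition: the Pl\"ucker relation is stated as a classical fact and used as a black box in Proposition \ref{f1}, so any self-contained check you supply is extra content rather than a parallel to an argument in the text. Of your two routes, the twelve-term direct expansion is airtight and characteristic-free. The Laplace-expansion route needs one small correction: expanding $\det\begin{pmatrix} X\\ X\end{pmatrix}$ along the first two rows sums over \emph{all six} two-element column subsets, and because the bottom block repeats the top block the terms pair up, so the expansion yields $2\left(f_{ij}f_{kl}-f_{ik}f_{jl}+f_{il}f_{jk}\right)$, not the three-term sum itself. Over a field of characteristic $2$ --- which is very much in scope here, since the paper works in arbitrary prime characteristic --- the conclusion ``$2g=0$ hence $g=0$'' fails, so as written the determinantal argument proves nothing in that case. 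The fix is cheap: either carry out the computation in $\ZZ[x_i,x_j,x_k,x_l,y_i,y_j,y_k,y_l]$, where $2g=0$ does force $g=0$, and then specialize to $K$ since the identity has integer coefficients; or simply fall back on your mechanical expansion, which I would recommend as the version to record.
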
		
		
\section{F-pure thresholds of graded rings}
	In this section we introduce the basic definitions for methods in prime characteristic. We also list some properties for the $F$-pure threshold of standard graded rings. We consider the $F$-pure threshold with respect to $\mm$, the maximal graded ideal. This invariant was introduced by Takagi and Watanabe \cite{TW04}. The $F$-pure threshold is related to the log-canonical threshold \cite{BFS13, TW04}, and roughly speaking measures the asymptotic splitting order of $\mm$.
	\begin{definition}
		Let $R$ be a Noetherian ring of prime characteristic $p$. We say that $R$ is \textit{$F$-finite} if it is finitely generated $R$-module via the action induced by the Frobenius endomorphism
		\begin{align*}
			F:R&\rightarrow R\\
			r&\mapsto r^p.
		\end{align*}
		For $e\in \NN$, let $F^e:R\rightarrow R$ the $e$-th iteration of the Frobenius endomorphism on $R$. If $R$ is reduced, $R^{1/p^e}$ denotes the ring of $p^e$-th roots of $R$. We often identify $F^e$ with the inclusion $R\subseteq R^{1/p^e}$. In this case,  $R$ is $F$-finite if and only if $R^{1/p}$ is a finitely generated $R$-module. For a standard graded $K$-algebra $(R,\mm,K)$, $R$ is $F$-finite if and only if $K$ is $F$-finite, that is, if and only if $[K:K^p]<\infty$. A ring $R$ is called \textit{$F$-pure} if $F$ is a pure homomorphism of $R$-modules, that is $F\otimes1:R\otimes M\rightarrow R\otimes M$ is injective for all $R$-modules $M$. We say that $R$ is called \textit{$F$-split} if $F$ is a split monomorphism. 
If $R$ is an $F$-finite ring, $R$ is $F$-pure if and only $R$ is $F$-split \cite[Corollary 5.3]{HRFpurity}.
		 Let $J\subseteq R$ an ideal we write $J^{[p]}:=(x^p|\,x\in J)$.
	\end{definition}
	\begin{lemma}[Fedder's Criterion for graded rings {\cite[Theorem 1.12]{F83}\label{fed}}]
		Let $K$ be a field of prime characteristic $p$, and  $S=K[x_1,\ldots,x_n]$  be a polynomial ring over $K$. Let $\mm=(x_1,\ldots,x_n)$ be the irrelevant maximal ideal of $S$, and  $I\subseteq \mm$ be a homogeneous ideal of $S$. Then $S/I$ is $F$-pure if $I^{[p]}:I\nsubseteq \mm^{[p]}$.
	\end{lemma}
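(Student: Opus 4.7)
The plan is to deduce Fedder's criterion from the explicit structure of $F_*S$ as a free $S$-module, which makes Frobenius-linear maps out of $S$ concretely computable. First I would rephrase $F$-purity of $R := S/I$ as a splitting condition: since $R$ is $F$-finite, $F$-purity is equivalent to being $F$-split, so I need to exhibit $\phi \in \Hom_R(F_*R, R)$ with $\phi(F_*\bar 1) \notin \mm R$ (by graded Nakayama, a degree-zero unit in the image can then be rescaled to produce an honest splitting).

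Second, I would identify $\Hom_S(F_*S, S)$ explicitly. Using the $S$-basis $\{F_*(x^b) : 0 \le b_i < p\}$ of $F_*S$, one shows that $\Hom_S(F_*S, S)$ is a free $F_*S$-module of rank one, generated by the canonical projection $\Phi$ which sends $F_*(x^{(p-1,\dots,p-1)})$ to $1$ and all other basis monomials to $0$. Thus every $S$-linear map $F_*S \to S$ has the form $v \mapsto \Phi(uv)$ for a unique $u \in F_*S$, and a direct calculation gives the Frobenius-twisted identity $\Phi(s^p g) = s\,\Phi(g)$; in particular $\Phi(\mm^{[p]}) \subseteq \mm$.

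Third, I would descend to $R$. The map $u\Phi$ factors through $F_*R \to R$ precisely when $u \cdot I \subseteq I^{[p]}$, that is, when $u \in (I^{[p]} : I)$, and two such $u$ induce the same map iff they differ by an element of $I^{[p]}$. This yields an isomorphism $\Hom_R(F_*R, R) \cong (I^{[p]}:I)/I^{[p]}$, under which the splitting condition becomes $\Phi(u) \notin \mm$ for some $u \in (I^{[p]}:I)$. Assuming $(I^{[p]}:I) \not\subseteq \mm^{[p]}$, pick $u \in (I^{[p]}:I) \setminus \mm^{[p]}$ and expand $u = \sum_b s_b^p x^b$ in the free basis; by hypothesis some $s_b$ is a unit. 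Replacing $u$ by the multiple $x^{(p-1,\dots,p-1)-b}\,u$ (still in the colon ideal), and observing that only the $c=b$ summand in the expansion contributes to the $x^{(p-1,\dots,p-1)}$-slot after redecomposing into the free basis, one gets $\Phi(x^{(p-1,\dots,p-1)-b}\,u) = s_b \notin \mm$, as required.

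The main obstacle is step two: establishing that $\Hom_S(F_*S, S)$ is free of rank one over $F_*S$ with the explicit generator $\Phi$. This is a Grothendieck-duality-type assertion reflecting the regularity of $S$, and once it is in hand the rest of the argument is a formal manipulation of colon ideals together with a short monomial bookkeeping.
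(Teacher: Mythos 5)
The paper offers no proof of this lemma at all: it is quoted verbatim from Fedder \cite[Theorem 1.12]{F83}, so there is nothing internal to compare against. Your argument is essentially Fedder's original one (the generator $\Phi$ of $\Hom_S(F_*S,S)$, the identification $\Hom_R(F_*R,R)\cong (I^{[p]}:I)/I^{[p]}$, and the evaluation trick with $x^{(p-1,\dots,p-1)-b}u$), and it is correct, with only routine touch-ups needed: since the lemma only asserts the ``if'' direction, you never need $F$-finiteness or the rank-one freeness of $\Hom_S(F_*S,S)$ --- exhibiting the single splitting $u\Phi$ suffices, because split always implies pure; the monomial basis $\{F_*(x^b):0\le b_i<p\}$ as you use it presumes $K$ perfect, and for general $K$ one should either insert a $p$-basis of $K$ or replace the slot projection by $\theta\otimes(\text{projection})$ for a $K$-linear retraction $\theta:K^{1/p}\to K$ with $\theta(1)=1$; ``some $s_b$ is a unit'' should read ``some $s_b\notin\mm$'' (which is all you actually use); and to convert $\phi(F_*\bar 1)\notin\mm$ into an honest splitting, choose $u$ homogeneous (possible since $(I^{[p]}:I)$ and $\mm^{[p]}$ are homogeneous ideals), so that the relevant coefficient is a nonzero constant and can be rescaled to $1$.
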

	\begin{definition}[{\cite[Definition 2.1]{TW04}}]
		Let $(R,\mm,K)$ be a standard graded $K$-algebra which is $F$-finite and $F$-pure, and let $I\subseteq R$ be a homogeneous ideal. For a real number $\lambda\geq0$, we say that $(R,I^\lambda)$ is \textit{$F$-pure} if for every $e\gg0$, there exists an element $f\in I^{\lfloor (p^e-1)\lambda\rfloor}$ such that the inclusion of $R$-modules $f^{1/p^e}R\subseteq R^{1/p^e}$ splits.
The $F$-pure threshold of $I$ is defined by
			\begin{equation*}
				\fpt(I):=\sup\{\lambda\in \RR_{\geq0}|(R,I^\lambda)\text{ is } F \text{ pure}\}.
			\end{equation*}
			If $I=\mm$, we denote the $F$-pure threshold by $\fpt(R)$.
	\end{definition}
	
	\begin{definition}
		Let $(R,\mm,K)$ be a standard graded $K$-algebra which is $F$-finite and $F$-pure, and $J\subseteq \mm$ be and ideal. We define
			\begin{equation*}
				I_e(R):=\{r\in R|\varphi(r^{1/p^e})\in\mm\text{ for every }\varphi\in\Hom(R^{1/p^e},R)\}
			\end{equation*}
and
		\begin{equation*}
			b_J(p^e):=\max\{r|J^r\nsubseteq I_e(R)\}.
		\end{equation*}
	\end{definition}
	\begin{proposition}[{\cite[Proposition 3.10]{DSNB}}]
		Let $(R,\mm,K)$ be a standard graded $K$-algebra which is $F$-finite and $F$-pure. Let $J\subseteq R$ be a homogeneous ideal. Then
		\begin{equation*}
			\fpt(J)=\lim_{e\to\infty}\frac{b_J(p^e)}{p^e}.
		\end{equation*}
	\end{proposition}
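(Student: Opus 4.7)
The plan is to reformulate $F$-purity of $(R,J^{\lambda})$ in terms of the ideals $I_{e}(R)$, establish that the sequence $b_{J}(p^{e})/p^{e}$ is non-decreasing and bounded (hence convergent), and then identify its limit with $\fpt(J)$. For a homogeneous $f\in R$, the inclusion $f^{1/p^{e}}R\subseteq R^{1/p^{e}}$ splits as $R$-modules exactly when some $\varphi\in\Hom_{R}(R^{1/p^{e}},R)$ satisfies $\varphi(f^{1/p^{e}})\notin\mm$, i.e.\ $f\notin I_{e}(R)$. Hence $(R,J^{\lambda})$ is $F$-pure precisely when $\lfloor(p^{e}-1)\lambda\rfloor\leq b_{J}(p^{e})$ for all $e\gg 0$, so
\[
\fpt(J)=\sup\{\lambda\geq 0 : \lfloor(p^{e}-1)\lambda\rfloor\leq b_{J}(p^{e})\text{ for }e\gg 0\}.
\]

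Next I would verify the monotonicity $b_{J}(p^{e+1})\geq p\cdot b_{J}(p^{e})$. Since $R$ is $F$-finite and $F$-pure, the inclusion $R\hookrightarrow R^{1/p}$ is split by some $\sigma\colon R^{1/p}\to R$; taking $p^{e}$-th roots yields a splitting $\tau:=\sigma^{1/p^{e}}\colon R^{1/p^{e+1}}\to R^{1/p^{e}}$ of the inclusion $R^{1/p^{e}}\hookrightarrow R^{1/p^{e+1}}$. If $f\in J^{r}$ and $\varphi\in\Hom_{R}(R^{1/p^{e}},R)$ satisfies $\varphi(f^{1/p^{e}})\notin\mm$, then $\psi:=\varphi\circ\tau\colon R^{1/p^{e+1}}\to R$ sends $(f^{p})^{1/p^{e+1}}=f^{1/p^{e}}$ to $\varphi(f^{1/p^{e}})\notin\mm$, while $f^{p}\in J^{pr}$; hence $b_{J}(p^{e+1})\geq pr$. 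For boundedness, note that any graded $\varphi\colon R^{1/p^{e}}\to R$ with $\varphi(f^{1/p^{e}})\in K^{\times}$ has degree $-\deg(f)/p^{e}$, and the minimum degree of a nonzero graded map $R^{1/p^{e}}\to R$ is bounded below by $-C$ for a constant $C$ independent of $e$, controlled by the $R$-module generating degrees of $R^{1/p^{e}}$ (which remain bounded by a constant in the $F$-finite standard graded setting). Since $J\subseteq\mm$ is generated in degrees $\geq 1$, any $f\in J^{r}$ has $\deg f\geq r$, so $r\leq\deg f\leq Cp^{e}$ and $b_{J}(p^{e})/p^{e}\leq C$. Thus $L:=\lim_{e\to\infty}b_{J}(p^{e})/p^{e}$ exists and equals $\sup_{e}b_{J}(p^{e})/p^{e}$.

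Finally I would match $\fpt(J)=L$. For $\lambda<L$, choose $e$ large enough that $b_{J}(p^{e})>p^{e}\lambda\geq\lfloor(p^{e}-1)\lambda\rfloor$; then $(R,J^{\lambda})$ is $F$-pure, so $\fpt(J)\geq L$. Conversely, for $\lambda>L$ the bound $b_{J}(p^{e})\leq p^{e}L$ gives $\lfloor(p^{e}-1)\lambda\rfloor\geq p^{e}\lambda-\lambda-1>p^{e}L\geq b_{J}(p^{e})$ for $e\gg 0$, so $(R,J^{\lambda})$ is not $F$-pure and $\fpt(J)\leq L$.

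The main obstacle I expect is the boundedness step: producing a uniform constant $C$ that controls the degrees of graded maps $R^{1/p^{e}}\to R$, which hinges on bounding the $R$-module generating degrees of $R^{1/p^{e}}$ in the $F$-finite standard graded setting. Once this is in hand, the rest amounts to a routine comparison of growth rates together with the extension-of-splittings argument powered by $F$-purity of $R$.
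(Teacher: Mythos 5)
The paper offers no proof of this proposition --- it is imported verbatim from \cite[Proposition 3.10]{DSNB} --- so there is no internal argument to compare against. Your proposal is a correct reconstruction of the standard proof (and, as far as I can tell, of the one in the cited reference): translate the splitting condition into $J^{\lfloor(p^e-1)\lambda\rfloor}\nsubseteq I_e(R)$, use a splitting of $R\subseteq R^{1/p}$ to get $b_J(p^{e+1})\geq p\,b_J(p^e)$, bound the normalized sequence, and compare its limit with the supremum defining $\fpt(J)$. Two small remarks. First, the boundedness step you single out as the main obstacle can be done much more cheaply: since $J\subseteq\mm$ one has $J^r\subseteq\mm^r$, and $\mm^{N(p^e-1)+1}\subseteq\mm^{[p^e]}\subseteq I_e(R)$ (where $N$ is the number of algebra generators of $R$; the last containment follows from additivity of $p^e$-th roots), so $b_J(p^e)\leq N(p^e-1)$ and no uniform control on degrees of maps $R^{1/p^e}\to R$ is needed. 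Second, in the final comparison, when you choose a single $e$ with $b_J(p^e)>p^e\lambda$ you should say explicitly that monotonicity propagates the inequality to all larger $e$; $F$-purity of $(R,J^\lambda)$ requires the non-containment for every $e\gg0$, not just one, and your monotonicity lemma is exactly what closes that gap.
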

	
	\begin{lemma}[{\cite[Lemma 4.2]{DSNB}\label{mins}}]
		Let $S=K[x_1,\ldots,x_n]$ be a polynomial ring over an $F$-finite field $K$. Let $\nn=(x_1,\ldots,x_n)$ denote the maximal homogeneus ideal. Let $I\subseteq S$ be an homogeneous ideal such that $R:=S/I$ is an $F$-pure ring, and let $\mm=\nn R$. Then, 
		\begin{equation*}
			\min\left\lbrace s\in\NN\middle|\left[\frac{I^{[p^e]}:I+\nn^{[p^e]}}{\nn^{[p^e]}}\right]_s\neq0\right\rbrace=n(p^e-1)-b_\mm(p^e).
		\end{equation*}
	\end{lemma}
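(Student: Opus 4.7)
The plan is to translate both sides into statements about the trace map $\Phi\colon S^{1/p^e}\to S$ generating $\Hom(S^{1/p^e},S)$ as a rank-one $S^{1/p^e}$-module, characterised by $\Phi(m^{1/p^e})=0$ on every monomial basis element $m=\prod x_i^{r_i}$ with $0\le r_i<p^e$ except $m=x_1^{p^e-1}\cdots x_n^{p^e-1}$, on which it equals $1$. A direct calculation then shows that $\Phi(f^{1/p^e})\notin\nn$ iff the coefficient of $x_1^{p^e-1}\cdots x_n^{p^e-1}$ in $f$ is nonzero. By the standard Fedder-type correspondence underlying Lemma \ref{fed}, there is an isomorphism of graded $R^{1/p^e}$-modules $\Hom(R^{1/p^e},R)\cong (I^{[p^e]}:I)/I^{[p^e]}$ sending a homogeneous class $u$ to $\varphi_u(\bar g^{1/p^e}):=\Phi((ug)^{1/p^e})\bmod I$; since $I\subseteq\nn$, the image $\varphi_u(\bar g^{1/p^e})$ lies in $\mm$ iff the coefficient of $x_1^{p^e-1}\cdots x_n^{p^e-1}$ in $u\tilde g$ vanishes. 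Consequently $I_e(R)$ is a homogeneous ideal of $R$, and $b_\mm(p^e)$ is the largest degree $d$ with $R_d\nsubseteq I_e(R)$. Denote the minimum on the left-hand side by $s$.

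For $s\leq n(p^e-1)-b_\mm(p^e)$, pick a homogeneous class $\bar r\in R_{b_\mm(p^e)}\setminus I_e(R)$ together with a homogeneous witness $u\in I^{[p^e]}:I$ for which $u\tilde r$ has nonzero coefficient at $x_1^{p^e-1}\cdots x_n^{p^e-1}$. Homogeneity and degree comparison force $\deg u = n(p^e-1)-b_\mm(p^e)$, while expanding the coefficient as $\sum_{A+B=(p^e-1,\ldots,p^e-1)} u_A \tilde r_B$ forces some multi-index $A\leq(p^e-1,\ldots,p^e-1)$ with $u_A\neq 0$; in particular $u\notin\nn^{[p^e]}$, which produces a nonzero homogeneous element of $((I^{[p^e]}:I)+\nn^{[p^e]})/\nn^{[p^e]}$ in degree $n(p^e-1)-b_\mm(p^e)$.

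Conversely, take a homogeneous $u\in(I^{[p^e]}:I)_s$ with $u\notin\nn^{[p^e]}$, and pick a monomial $x^A$ appearing in $u$ with every $a_i<p^e$. Set $\tilde r:=\prod_i x_i^{p^e-1-a_i}$, a single monomial of degree $n(p^e-1)-s$ in $\nn^{\,n(p^e-1)-s}$. Because $\tilde r$ has only one monomial, the coefficient of $x_1^{p^e-1}\cdots x_n^{p^e-1}$ in $u\tilde r$ is exactly the nonzero scalar $u_A$; hence $\varphi_u(\bar{\tilde r}^{1/p^e})\notin\mm$, so $\bar{\tilde r}\in\mm^{\,n(p^e-1)-s}\setminus I_e(R)$ and $b_\mm(p^e)\geq n(p^e-1)-s$.

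The main obstacle is establishing the Fedder-type identification correctly: its compatibility with the grading, the possibility of choosing $u$ homogeneous, and the verification that $\Phi$ detects precisely the coefficient of the top monomial $x_1^{p^e-1}\cdots x_n^{p^e-1}$. Once these facts are in place, the explicit choice of $\tilde r$ as a single monomial collapses both inequalities to the one degree identity $\deg u+\deg\tilde r=n(p^e-1)$.
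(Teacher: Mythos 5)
The paper offers no proof of this lemma --- it is imported verbatim from \cite[Lemma 4.2]{DSNB} --- so the only question is whether your argument stands on its own, and in substance it does: it is the standard Fedder-type argument (the same circle of ideas as in the cited source). Your dictionary (every $\varphi\in\Hom(R^{1/p^e},R)$ comes from a homogeneous $u\in I^{[p^e]}:I$, so that $\bar r\notin I_e(R)$ exactly when $u\tilde r$ survives modulo $\nn^{[p^e]}$ for some such $u$), the homogeneity of $I_e(R)$, the identification of $b_\mm(p^e)$ with the largest $d$ such that $R_d\nsubseteq I_e(R)$, and the two degree counts --- in particular the choice of the complementary monomial $\tilde r=\prod_i x_i^{p^e-1-a_i}$, which makes the relevant coefficient exactly $u_A\neq 0$ --- are all correct. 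The one inaccuracy is your opening claim that $\Hom_S(S^{1/p^e},S)$ is free of rank one over $S^{1/p^e}$ on the trace $\Phi$: this requires $K$ perfect, whereas the lemma only assumes $K$ $F$-finite (the rank is $[K:K^{p^e}]$ in general). This is a routine fix rather than a conceptual gap: the invariant statement you actually need is that a polynomial $f$ admits some $\varphi\in\Hom_S(S^{1/p^e},S)$ with $\varphi(f^{1/p^e})\notin\nn$ if and only if $f\notin\nn^{[p^e]}$, which holds for all $F$-finite $K$. Replacing ``nonzero coefficient on $x_1^{p^e-1}\cdots x_n^{p^e-1}$'' by ``$u\tilde r\notin\nn^{[p^e]}$'' throughout, your first direction gives $\deg u\le n(p^e-1)-b_\mm(p^e)$ (which suffices, since you also check $u\notin\nn^{[p^e]}$), and your second direction is unchanged because the top monomial genuinely occurs in $u\tilde r$ with coefficient $u_A$. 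You flag the Fedder identification as the delicate point but do not carry out the verification; with the adjustment above, the proof is complete and matches the intended argument.
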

	\begin{theorem}[{\cite[Theorem 7.3]{DSNB}\label{prop1}}]
	Let $S=K[x_1,\ldots,x_n]$ be a polynomial ring over an $F$-finite field $K$ of prime characteristic $p$. Let $I$ be a homogeneous ideal such that $R=S/I$ is $F$-pure and Gorenstein. Then, 
		\begin{equation*}
			\reg_S(R)=\dim(R)-\fpt(R).
		\end{equation*}
	\end{theorem}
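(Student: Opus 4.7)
The plan is to compute $b_\mm(p^e)$ explicitly using the Gorenstein hypothesis and then extract $\fpt(R)$ via the limit formula. The $F$-pure and Gorenstein hypotheses combine well: Fedder's criterion guarantees that the colon ideal $I^{[p^e]}:I$ is nondegenerate modulo $\nn^{[p^e]}$, while the Gorenstein condition pins down its minimum-degree generator exactly.

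First I would establish that, for a Gorenstein ideal $I \subseteq S$ of codimension $c = n - d$ (with $d = \dim R$), the quotient $(I^{[p^e]}:I)/I^{[p^e]}$ is a cyclic $S$-module, generated by a homogeneous element $u_e$ of degree $(p^e - 1)\sigma$, where $\sigma$ is the top twist in the minimal graded free resolution $F_\bullet$ of $R$ over $S$ (so $F_c = S(-\sigma)$ with $\beta_c(R) = 1$). Cyclicity follows from the self-duality of the Gorenstein resolution, which identifies $\Ext^c_S(R, S)$ with $R$ up to shift. The degree computation is modeled on the complete intersection case, where $u_e = (f_1 \cdots f_c)^{p^e - 1}$ has degree $(p^e - 1)\sum \deg f_i = (p^e - 1)\sigma$; for general Gorenstein $R$ the $F$-finite characterization that $\Hom_R(R^{1/p^e}, R)$ is free of rank one over $R^{1/p^e}$, combined with the natural isomorphism of this Hom module with a graded twist of $(I^{[p^e]}:I)/I^{[p^e]}$, forces the same shift. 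By the self-duality of $F_\bullet$ one has $\reg(R) = \sigma - c$, i.e.\ $\sigma = \reg(R) + c$.

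Next I would invoke $F$-purity via Fedder's criterion (Lemma \ref{fed}) to conclude $u_e \notin \nn^{[p^e]}$ for all $e$. Hence $u_e$ represents the minimum-degree nonzero class in $(I^{[p^e]}:I + \nn^{[p^e]})/\nn^{[p^e]}$, and Lemma \ref{mins} yields
\begin{equation*}
(p^e - 1)(\reg(R) + c) \;=\; \deg u_e \;=\; n(p^e - 1) - b_\mm(p^e),
\end{equation*}
whence $b_\mm(p^e) = (p^e - 1)(n - c - \reg(R)) = (p^e - 1)(\dim(R) - \reg(R))$. Dividing by $p^e$, sending $e \to \infty$, and invoking the limit formula $\fpt(R) = \lim_e b_\mm(p^e)/p^e$ gives $\fpt(R) = \dim(R) - \reg(R)$, which rearranges to the desired identity.

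The main obstacle is the first step—pinning down the exact degree of $u_e$ beyond the complete intersection case. The cleanest route compares two computations of the cyclic module $(I^{[p^e]}:I)/I^{[p^e]}$: one via the self-dual Gorenstein resolution (which tracks the shift $\sigma$), and another via a Frobenius-transfer identity relating it to $\Hom_S(S/I, S/I^{[p^e]})$. Matching the generating degrees of these two descriptions forces the shift $(p^e - 1)\sigma$, and the symmetry of the Gorenstein resolution converts $\sigma$ into $\reg(R) + c$. Once this is established for $e = 1$, compatibility of Frobenius iterations propagates the formula to all $e$.
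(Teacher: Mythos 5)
The paper does not actually prove this statement; it is quoted verbatim as \cite[Theorem 7.3]{DSNB}, so there is no internal proof to compare against. Judged on its own, your argument is correct and is essentially the argument of the cited source: use Fedder's observation that for a Gorenstein quotient $(I^{[p^e]}:I)/I^{[p^e]}$ is cyclic with homogeneous generator $u_e$, pin down $\deg u_e=(p^e-1)\sigma$, use $F$-purity to see $u_e\notin\nn^{[p^e]}$, and feed this into Lemma \ref{mins} and the limit formula for $b_\mm(p^e)/p^e$. The one step you rightly flag as the crux --- that $\deg u_e=(p^e-1)\sigma$ in general and not just for complete intersections --- does go through, and the cleanest bookkeeping is the graded duality you allude to: since Frobenius is flat, $S/I^{[p^e]}$ is Gorenstein with $\Ext^c_S(S/I^{[p^e]},S)\cong (S/I^{[p^e]})(p^e\sigma)$, so
\begin{equation*}
\frac{I^{[p^e]}:I}{I^{[p^e]}}\cong\Hom_{S/I^{[p^e]}}\bigl(S/I,S/I^{[p^e]}\bigr)\cong (S/I)\bigl(-(p^e-1)\sigma\bigr),
\end{equation*}
which gives both cyclicity and the degree in one stroke; combined with $\reg(R)=\sigma-c$ (regularity of a Cohen--Macaulay module is attained at the last step of its resolution) this yields $b_\mm(p^e)=(p^e-1)(\dim R-\reg R)$ exactly as you claim. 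If you write this up, the only details to be careful about are the grading conventions in the flat base change along Frobenius and the fact that Fedder's criterion detects $F$-splitting at every $e$, not just $e=1$; neither is a gap, just bookkeeping.
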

	\begin{proposition}	
		Let $S=K[x_1,\ldots,x_n,y_1,\ldots,y_m]$ the ring of polynomials in $n+m$ variables over a field $K$, and let $J=\aaa+\bb\subseteq R$ be an ideal such that $\aaa$ is an ideal in the variables $\{x_1,\ldots,x_n\}$ and $\bb$ is an ideal in the variables $\{y_1,\ldots,y_m\}$. Then, 
			\begin{equation*}
				\fpt(S/J)=\fpt(K[x_1,\ldots,x_n]/\aaa)+\fpt(K[y_1,\ldots,y_m]/\bb).
			\end{equation*}
	\end{proposition}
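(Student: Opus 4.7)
The plan is to identify $S/J \cong A \otimes_K B$ where $A = K[x_1,\ldots,x_n]/\aaa$ and $B = K[y_1,\ldots,y_m]/\bb$, and to prove additivity of the invariant that controls the $F$-pure threshold via Lemma~\ref{mins}. Write $T_A, T_B$ for the two polynomial rings with maximal ideals $\nn_A, \nn_B$, so $S = T_A \otimes_K T_B$ and $\nn = \nn_A S + \nn_B S$. For every $e \geq 1$ there is a natural isomorphism
\[
S/J^{[p^e]} \;\cong\; (T_A/\aaa^{[p^e]}) \otimes_K (T_B/\bb^{[p^e]}),
\]
since $J^{[p^e]} = \aaa^{[p^e]} S + \bb^{[p^e]} S$. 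The heart of the argument is the identity
\[
\frac{J^{[p^e]}:J}{J^{[p^e]}} \;=\; \ann_{S/J^{[p^e]}}(\aaa + \bb) \;=\; \frac{\aaa^{[p^e]}:\aaa}{\aaa^{[p^e]}} \otimes_K \frac{\bb^{[p^e]}:\bb}{\bb^{[p^e]}},
\]
which I would deduce from the following linear-algebraic fact: for $K$-subspaces $N_i \subseteq U_i$ of $K$-algebras ($i = 1, 2$),
\[
\ann_{U_1 \otimes_K U_2}(N_1 \otimes 1 + 1 \otimes N_2) \;=\; \ann_{U_1}(N_1) \otimes_K \ann_{U_2}(N_2).
\]
This is verified by writing an element of $U_1 \otimes_K U_2$ in a $K$-basis adapted to the decompositions $U_i = \ann_{U_i}(N_i) \oplus C_i$, computing each factor annihilator as $\ann_{U_1}(N_1) \otimes U_2$ or $U_1 \otimes \ann_{U_2}(N_2)$, and observing that their intersection is the claimed tensor product.

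Next I would descend along the natural surjection $S/J^{[p^e]} \twoheadrightarrow S/\nn^{[p^e]} \cong (T_A/\nn_A^{[p^e]}) \otimes_K (T_B/\nn_B^{[p^e]})$ to obtain
\[
V_J \;:=\; \frac{(J^{[p^e]}:J) + \nn^{[p^e]}}{\nn^{[p^e]}} \;=\; V_A \otimes_K V_B,
\]
where $V_A, V_B$ are the analogous graded subspaces for $\aaa$ in $T_A$ and $\bb$ in $T_B$. Because the grading on $S/\nn^{[p^e]}$ is the total degree of the natural bigrading, the minimum nonzero degrees add: denoting by $\alpha_I(e)$ the quantity appearing on the left side of Lemma~\ref{mins},
\[
\alpha_J(e) \;=\; \alpha_\aaa(e) + \alpha_\bb(e).
\]

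To conclude I would apply Lemma~\ref{mins} to each of $\aaa$, $\bb$, $J$, which rearranges to $b_\mm(p^e) = b_{\mm_A}(p^e) + b_{\mm_B}(p^e)$; dividing by $p^e$ and letting $e \to \infty$, the limit formula for the $F$-pure threshold stated just above Lemma~\ref{mins} gives the claimed additivity. The main obstacle I anticipate is the annihilator identity in the first step: one must work modulo the small ideal $J^{[p^e]}$ rather than directly modulo $\nn^{[p^e]}$, because only at that level does the colon ideal coincide with an honest annihilator that factors through the tensor-product structure. Already in the toy case $\aaa = (x_1)$, $\bb = (y_1)$ one checks that $(J^{[p^e]}:J)$ strictly contains $(\aaa^{[p^e]}:\aaa)(\bb^{[p^e]}:\bb)$ in $S$, even though the two agree modulo $\nn^{[p^e]}$, which is exactly why the passage through $S/J^{[p^e]}$ is necessary.
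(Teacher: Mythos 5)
Your proposal is correct, but it takes a genuinely different route from the paper's proof. The paper works directly with the splitting invariants $b(p^e)$: from a map $\varphi\in\Hom(S^{1/p^e},S)$ with $\varphi(r^{1/p^e})\notin\mm$ for a product $r=\alpha\beta$ of generators of powers of $\aaa$ and $\bb$, it manufactures a witness for each tensor factor by composing with $u^{1/p^e}\mapsto (u\beta)^{1/p^e}$ and the retraction onto $K[x_1,\ldots,x_n]$, and conversely tensors two such witnesses together, obtaining $b_J(p^e)=b_\aaa(p^e)+b_\bb(p^e)$ before passing to the limit. You instead argue on the Fedder side: the annihilator identification of $(J^{[p^e]}:J)/J^{[p^e]}$ with the tensor product of the two colon modules (your linear-algebra lemma about $\ann_{U_1\otimes_K U_2}(N_1\otimes 1+1\otimes N_2)$ is valid over a field, as is the intersection step), reduction modulo $\nn^{[p^e]}$, and Lemma~\ref{mins} to turn additivity of minimal degrees into $b_\mm(p^e)=b_{\mm_A}(p^e)+b_{\mm_B}(p^e)$ for the maximal ideals of the quotient rings, which is exactly the invariant computing $\fpt(S/J)$ as defined in this paper. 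What your approach buys: it avoids manipulating $p^{-e}$-linear maps altogether, and its bookkeeping lands squarely on $b_\mm$ of the quotients, whereas the paper's computation is phrased in terms of $I_e(S)$ and the ideals $J$, $\aaa$, $\bb$ themselves; your observation that the clean factorization only holds modulo $J^{[p^e]}$ (not as ideals of $S$) is accurate and is the right technical point. What the paper's approach buys: it does not need the graded degree count, so it does not rely on homogeneity of $\aaa$ and $\bb$ the way your appeal to Lemma~\ref{mins} does. Both arguments use implicitly that $S/J$, $K[x_1,\ldots,x_n]/\aaa$ and $K[y_1,\ldots,y_m]/\bb$ are $F$-pure (needed for $\fpt$ and $b_\mm$ to be defined and for the spaces $V_A$, $V_B$ to be nonzero); since this hypothesis is already implicit in the statement, it is not a gap in your argument.
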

	\begin{proof}
		We note that 
		$$S/J\cong K[x_1,\ldots,x_n]/\aaa\otimes K[y_1,\ldots,y_m]/\bb$$ by properties of tensor product. We now prove that $b_J(p^e)=b_\aaa(p^e)+b_\bb(p^e)$.		
		Set $\gamma=b_J(p^e)$. By the definition of $b_J(p^e)$, we have that $J^\gamma\nsubseteq I_e(S)$. This means that there exists a generator $r$ of $J^\gamma$ and also a morphism $\varphi\in \Hom(S^{1/p^e},S)$ such that $\varphi(r^{1/p^e})\notin\mm$. This generator can be written as $r=a_{i_1}a_{i_2},\ldots,a_{i_s}b_{j_1},\ldots,b_{j_t}$ for some generators $\{a_{i_1}a_{i_2},\ldots,a_{i_s}\}$ of $\aaa$ and some generators $\{b_{j_1},\ldots,b_{j_t}\}$ of $\bb$ with $\gamma=s+j$. This element correspond by 
		\begin{equation}\label{eq1}
			S\cong K[x_1,\ldots,x_n]\otimes		K[y_1,\ldots,y_m]
		\end{equation}		 
		to a element $\alpha\otimes\beta$ in the tensor product with $\alpha\in\aaa^s$ and $\beta\in\bb^t$. We have the next composition of morphisms:
		\begin{alignat*}{7}
			K[x_1,\ldots,x_n]^{1/p^{e}}	&& \longrightarrow 	&&S^{1/p^e}	&& \overset{\varphi}\longrightarrow 	&&& S					&\twoheadrightarrow 	&&&K[x_1,\ldots,x_n] \\
			\alpha^{1/p^e}				&& \longmapsto		&&r^{1/p^e}	&& \longmapsto 		&&& \varphi(r^{1/p^e})	&\mapsto 				&&&\overline{\varphi(r^{1/p^e})},
		\end{alignat*}
		where the leftmost morphism send $x^{1/p^e}\mapsto x^{1/p^e}\otimes \beta^{1/p^e}$ and the rightmost is the natural projection. We have an element $\alpha\in\aaa^s$ and a morphism 
			\begin{equation*}
				\varphi*\in\Hom(K[x_1,\ldots,x_n]^{1/p^{e}},K[x_1,\ldots,x_n])
			\end{equation*}
		(the composition of the morphisms above). Then, $\varphi*$ sends $\alpha^{1/p^e}$ to $\overline{\varphi(r^{1/p^e})}$. We note that $\overline{\varphi(r^{1/p^e})}$ is not in the maximal ideal of $K[x_1,\ldots,x_n]$, because $\varphi(r^{1/p^e})\notin\mm$. 		
		Now we note that 
			\begin{equation*}
				b_\aaa(p^e)=\max(r|\aaa^r\nsubseteq I_e(K[x_1,\ldots,x_n]))\geq s.
			\end{equation*}
			By symmetric argument, we obtain that 
			\begin{equation*}
				b_\bb(p^e)=\max(r|\bb^r\nsubseteq I_e(K[y_1,\ldots,y_m]))\geq t
			\end{equation*}
			proving that $b_J(p^e)\leq b_\aaa(p^e)+b_\bb(p^e)$.			
			For the reverse inequality let $s=b_\aaa(p^e)$, $t=b_\bb(p^e)$, $\alpha\in\aaa^s$, $\beta\in\bb^t$,
			 $$\varphi\in\Hom(K[x_1,\ldots,x_n]^{1/p^e},K[x_1,\ldots,x_n])$$ 
and			 
			 $$\psi\in\Hom(K[y_1,\ldots,y_m]^{1/p^e},K[y_1,\ldots,y_m])$$ such that $\varphi(\alpha^{1/p^e})$ and $\psi(\beta^{1/p^e})$ are not in their respective maximal ideals. Then, by equation \ref{eq1}, $\alpha\otimes\beta$ corresponds to an element $\alpha\beta\in J^{t+s}$. Since
			\begin{equation*}
				(\varphi\otimes\psi)(\alpha^{1/p^e}\otimes\beta^{1/p^e})=\varphi(\alpha^{1/p^e})\psi(\beta^{1/p^e})\notin\mm.
			\end{equation*}
			Then $b_J(p^e)\geq t+s$.
	\end{proof}
	\begin{theorem}[{\cite[Theorem 4.7]{DSNB}\label{comp}}]
		Let $(R,\mm,K)$ a standard graded $K$-algebra wich is $F$-finite and $F$-pure, and let $J\subseteq R$ a compatible ideal. Then,
		\begin{equation*}
			\fpt(R)\leq\fpt(R/J).
		\end{equation*}
		In particular,
		\begin{equation*}
			\fpt(R)\leq\fpt(R/\pp)
		\end{equation*}
		for every minimal prime ideal $\pp$ of $R$.
	\end{theorem}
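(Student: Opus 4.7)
The plan is to compare the sequences $b_\mm(p^e)$ and $b_{\mm/J}(p^e)$ directly, then pass to the limit using Proposition 3.10 from the excerpt. First I would unpack compatibility: by hypothesis every $\varphi\in\Hom(R^{1/p^e},R)$ sends $J^{1/p^e}$ into $J$, so $\varphi$ descends to a well-defined map $\bar\varphi\in\Hom((R/J)^{1/p^e},R/J)$ satisfying $\bar\varphi(\bar x^{1/p^e})=\overline{\varphi(x^{1/p^e})}$. Applying this to a Frobenius splitting of $R$ shows that $R/J$ is itself $F$-pure, so $\fpt(R/J)$ is defined in the first place.

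Next I would argue $b_\mm(p^e)\leq b_{\mm/J}(p^e)$. Fix $e$ and set $s=b_\mm(p^e)$, so by definition there exist $r\in\mm^s$ and $\varphi\in\Hom(R^{1/p^e},R)$ with $\varphi(r^{1/p^e})\notin\mm$. Passing to $R/J$, the class $\bar r$ lies in $(\mm/J)^s$ and $\bar\varphi(\bar r^{1/p^e})=\overline{\varphi(r^{1/p^e})}$. Because $J\subseteq\mm$, the preimage of $\mm/J$ in $R$ is exactly $\mm$; hence $\overline{\varphi(r^{1/p^e})}\notin\mm/J$. Therefore $(\mm/J)^s\not\subseteq I_e(R/J)$, yielding $b_{\mm/J}(p^e)\geq b_\mm(p^e)$. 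Dividing by $p^e$ and letting $e\to\infty$ gives $\fpt(R)\leq\fpt(R/J)$.

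For the final clause, I would verify that every minimal prime of an $F$-pure (hence reduced) ring is compatible. Given minimal primes $\pp_1,\ldots,\pp_k$ with $\bigcap\pp_i=0$, for each $i$ minimality (via prime avoidance) yields $t\in\bigcap_{j\neq i}\pp_j\setminus\pp_i$, and then $t\pp_i\subseteq\bigcap\pp_j=0$. For any $\varphi\in\Hom(R^{1/p^e},R)$ and any $a\in\pp_i$, $R$-linearity of $\varphi$ gives $t\cdot\varphi(a^{1/p^e})=\varphi((t^{p^e}a)^{1/p^e})=\varphi(0)=0$; since $t\notin\pp_i$ this forces $\varphi(a^{1/p^e})\in\pp_i$, so $\pp_i$ is compatible and the first part applies. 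The only subtle point in the argument is recognizing that compatibility is exactly the condition needed to descend the witness for $b_\mm(p^e)$ to $R/J$; once that is in hand, both halves are essentially direct.
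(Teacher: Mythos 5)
Your argument is correct: the descent of any $\varphi\in\Hom(R^{1/p^e},R)$ along a compatible ideal, the resulting comparison $b_\mm(p^e)\le b_{\mm/J}(p^e)$, the passage to the limit via the formula $\fpt=\lim_e b_\mm(p^e)/p^e$, and the verification that minimal primes of the (reduced, $F$-pure) ring $R$ are compatible are all sound. Note that the paper itself gives no proof of this statement --- it is quoted from \cite[Theorem 4.7]{DSNB} --- and your proof follows essentially the same standard route as that source; the only small point to make explicit is that applying the graded limit formula to $R/J$ uses that $J$ is homogeneous (automatic for the minimal primes, which is the only case the paper actually needs).
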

	
	The next result shows us how to compute the $F$-pure threshold of a squarefree monomial ideal. 
	
	\begin{proposition}\label{fptsfm}
		Let $I$ be a square-free monomial ideal of $S=K[x_1,\ldots,x_n]$. Then $\fpt(S/I)$ is equal to the number of variables that do not appear in its minimal set of generators.
 	\end{proposition}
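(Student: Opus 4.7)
The plan is to split off the free variables using the tensor product proposition just proved, and then reduce to showing that the F-pure threshold vanishes when every variable appears in the generators.

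First I would write $S = K[x_1,\ldots,x_k,x_{k+1},\ldots,x_n]$ where $x_1,\ldots,x_k$ are exactly the variables that occur in the minimal generators of $I$. Then $I$ extends from the ideal $I' \subseteq S' := K[x_1,\ldots,x_k]$ generated by the same monomials, and the previous proposition gives
\[
  \fpt(S/I) = \fpt(S'/I') + \fpt(K[x_{k+1},\ldots,x_n]) = \fpt(S'/I') + (n-k),
\]
since the $F$-pure threshold of a standard graded polynomial ring in $m$ variables with respect to its maximal ideal is $m$. Thus the problem reduces to proving the base case: if every variable appears in some minimal generator of a squarefree monomial ideal $I \subseteq S = K[x_1,\ldots,x_n]$, then $\fpt(S/I) = 0$.

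To do this I plan to use Lemma \ref{mins}, which reduces the computation to finding the minimal $s$ with $[(I^{[p^e]}:I + \nn^{[p^e]})/\nn^{[p^e]}]_s \neq 0$. The key combinatorial claim is that the only monomial in $I^{[p^e]}:I$ that is not already in $\nn^{[p^e]}$ is $x_1^{p^e-1}\cdots x_n^{p^e-1}$. For this, take a monomial $f = x_1^{a_1}\cdots x_n^{a_n}$ with every $a_i \le p^e-1$, and analyze, for each minimal generator $g = x_{j_1}\cdots x_{j_t}$ of $I$, the requirement that $fg$ be divisible by $h^{p^e}$ for some minimal generator $h$. Since the exponents of $fg$ in the variables of $h$ are at most $a_{l} + 1 \le p^e$, the divisibility forces $h \mid g$ and $a_l = p^e-1$ for every variable $x_l$ of $h$; minimality of $g$ then forces $h = g$, so every variable of $g$ has $a_l = p^e - 1$. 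Running this over all minimal generators and using that every variable appears in at least one of them, every $a_l$ must equal $p^e - 1$, forcing $f = x_1^{p^e-1}\cdots x_n^{p^e-1}$.

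Conversely, this top monomial does belong to $I^{[p^e]}:I$, because multiplying by any squarefree generator $g$ gives a monomial divisible by $g^{p^e}$. This single monomial has degree $n(p^e-1)$, so Lemma \ref{mins} yields $b_\mm(p^e) = n(p^e-1) - n(p^e-1) = 0$, whence $\fpt(S'/I') = 0$; the same Fedder-type computation (with $e=1$) simultaneously verifies the $F$-purity hypothesis needed to apply Lemma \ref{mins}. Combining with the reduction step gives $\fpt(S/I) = n - k$, as required. The main obstacle is the colon ideal computation in step two — keeping track that minimality of $g$ together with the squarefree divisibility $h \mid g$ really pins down $h = g$ and forces all exponents to saturate — but once this combinatorial lemma is in hand, everything else is bookkeeping.
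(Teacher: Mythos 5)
Your proof is correct, and its core is the same as the paper's: both hinge on Lemma \ref{mins} together with the distinguished monomial $\prod x_i^{p^e-1}$ taken over the variables that occur in the generators. The organization differs, though. The paper works directly in the full ring: it exhibits $x_{i_1}^{p^e-1}\cdots x_{i_t}^{p^e-1}\in (I^{[p^e]}:I)\setminus\mm^{[p^e]}$ and then simply asserts that no monomial of smaller degree lies in $(I^{[p^e]}:I)\setminus\mm^{[p^e]}$, which via Lemma \ref{mins} gives $b_\mm(p^e)=(n-t)(p^e-1)$ and hence $\fpt(S/I)=n-t$ in one step. You instead first split off the unused variables with the additivity proposition (applied with $\bb=(0)$, plus the standard fact that the $F$-pure threshold of a polynomial ring in $m$ variables with respect to its maximal ideal is $m$), reducing to the case where every variable occurs, and there you prove the sharper statement that the only monomial of $(I^{[p^e]}:I)$ outside $\mm^{[p^e]}$ is $(x_1\cdots x_n)^{p^e-1}$. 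Your divisibility argument — $h^{p^e}\mid fg$ forces $\mathrm{supp}(h)\subseteq\mathrm{supp}(g)$ and exponent $p^e-1$ on $\mathrm{supp}(h)$, and minimality of the generating set then forces $h=g$ — is exactly the justification the paper's unproved degree-minimality claim needs (run over all generators $g$ and it shows any such $f$ has exponent $p^e-1$ on every occurring variable, so its degree is at least $t(p^e-1)$), so your longer route has the virtue of making that step explicit; the paper's route avoids the extra appeal to the additivity proposition and to the polynomial-ring fpt value. Your observation that the $e=1$ computation also certifies $F$-purity, needed to invoke Lemma \ref{mins}, is a point the paper leaves tacit as well.
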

	\begin{proof}
		Let $\{x_{i_1},\ldots,x_{i_t}\}$ be the set of variables that appear in the minimal generating set of  $I$. Then,
		\begin{equation*}
			x_{i_1}^{p^e-1}\cdots x_{i_t}^{p^e-1}\in(I^{[p^e]}:I)\backslash\mm^{[p^e]}.
		\end{equation*}
		Set
		
		\begin{equation*}
			d:=\deg{x_{i_1}^{p^e-1}\cdots x_{i_t}^{p^e-1}}.
		\end{equation*}
		For every monomial $m$ of degree less than $d$, we have that $m\notin(I^{[p^e]}:I)\backslash\mm^{[p^e]}$. By Lemma \ref{mins} $b_\mm(p^e)=(n-t)(p^e-1)$. Dividing both sides by $p^e$ and taking the limit as $e$ go to infinity yields the desired result.
	\end{proof}

\section{F-pure thresholds of binomial edge ideals}
	Through this section $K$ is a $F-finite$ field of characteristic $p$.
	For a sequence $v_1, \ldots,v_s$ of natural numbers, we set
		\begin{equation*}
			f_{v_1,\ldots,v_s}:=f_{v_1v_2}^{p-1}\cdots f_{v_{n-1}v_n}^{p-1}
		\end{equation*}
	taking $f_{ji}:=-f_{ij}$ for $j>$i. 
	\begin{proposition}\label{f1}
		If $\{a,b\}\in E(G)$. Then,
			\begin{equation*}
				f_{v_1,\ldots,c,a,b,d,\ldots,v_s}\equiv f_{v_1,\ldots,c,b,a,d,\ldots,v_s}\,\mod\jg.
			\end{equation*}
	\end{proposition}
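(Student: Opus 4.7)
The plan is to observe that both products appearing in the congruence lie in the ideal $\jg$ itself, making the congruence trivially the statement $0\equiv 0$.

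First, examining the left-hand side: by the very definition of the notation, the product $f_{v_1,\ldots,c,a,b,d,\ldots,v_s}$ contains among its $(p-1)$-st-power factors the term $f_{ab}^{p-1}$, arising from the consecutive pair $(a,b)$ in the sequence. Since $\{a,b\}\in E(G)$ by hypothesis, $f_{ab}$ is one of the generators of $\jg$, and because $p-1\geq 1$, the power $f_{ab}^{p-1}$ already lies in $\jg$. Multiplying by the remaining factors keeps us in $\jg$, so the entire left-hand side lies in $\jg$.

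For the right-hand side, the analogous factor arising from the pair $(b,a)$ is $f_{ba}^{p-1}$. Using the convention $f_{ba}=-f_{ab}$, this equals $(-1)^{p-1}f_{ab}^{p-1}$; the sign $(-1)^{p-1}$ is $+1$ in every prime characteristic (for odd $p$ because $p-1$ is even, and for $p=2$ because $-1=1$). Hence $f_{ba}^{p-1}=f_{ab}^{p-1}\in\jg$, and the right-hand side also lies in $\jg$. Both sides therefore reduce to $0$ modulo $\jg$, and the claimed congruence holds.

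There is really no obstacle; the only small verification is the sign identity $f_{ba}^{p-1}=f_{ab}^{p-1}$, which is immediate from the parity of $p-1$ (or from working in characteristic two). If one preferred a proof that does not merely observe that both sides vanish modulo $\jg$ — perhaps to mirror the style of later manipulations where the edge hypothesis is absent — then the natural tool would be the Pl\"ucker identity $f_{ca}f_{bd}-f_{cb}f_{ad}+f_{cd}f_{ab}=0$, which gives $f_{ca}f_{bd}\equiv f_{cb}f_{ad}\pmod{\jg}$ since $f_{ab}\in\jg$; raising this to the $(p-1)$-st power and multiplying by the common factor $f_{ab}^{p-1}$ together with the unchanged outer factors would again produce the claim. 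For the statement as given, however, the direct membership argument is cleanest.
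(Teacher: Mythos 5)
Your argument does prove the statement exactly as printed, but it also exposes that the printed statement is vacuous, and the vacuous version is neither what the paper's own proof establishes nor what the paper later needs. In the proof of Theorem \ref{sgrad} the proposition is invoked to produce congruences \emph{modulo $\jg^{[p]}$}: one replaces $f_{1,\ldots,n}$ by an expression containing the factor $f_{ij}^{p-1}$ modulo $\jg^{[p]}$, so that after multiplying by $f_{ij}$ one lands in $\jg^{[p]}$ and Fedder's criterion applies. The ``$\operatorname{mod}\ \jg$'' in the statement (and in the last line of the paper's proof) should be read as ``$\operatorname{mod}\ \jg^{[p]}$''; with your reading the proposition asserts $0\equiv 0$ and is useless for that application. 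Moreover your membership argument cannot be upgraded to the intended statement, since $f_{ab}^{p-1}\notin\jg^{[p]}$ in general --- indeed the whole point of the Fedder computation is that $f_{1,\ldots,n}\notin\mm^{[p]}\supseteq\jg^{[p]}$.

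The paper's actual proof is essentially the computation you relegate to your closing remark, but carried out so as to control where the error terms land: by the Pl\"ucker relation, $f_{ca}^{p-1}f_{ab}^{p-1}f_{bd}^{p-1}=f_{ab}^{p-1}\(f_{cb}f_{ad}-f_{cd}f_{ab}\)^{p-1}$, and expanding the $(p-1)$-st power binomially, every term with $i\geq 1$ carries $f_{ab}^{p+i-1}$ with exponent at least $p$, hence lies in $\jg^{[p]}$ because $f_{ab}\in\jg$ (so the edge hypothesis is exactly what makes the stronger congruence work; it is not dispensable). The surviving $i=0$ term is $f_{cb}^{p-1}f_{ad}^{p-1}f_{ab}^{p-1}$, which gives the swap modulo $\jg^{[p]}$ after multiplying by the unchanged outer factors. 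Your sketched Pl\"ucker route only reaches a congruence modulo $\jg$ because you first pass to $f_{ca}f_{bd}\equiv f_{cb}f_{ad}\pmod{\jg}$ and then raise to the $(p-1)$-st power, which discards the extra powers of $f_{ab}$; if instead you expand $\(f_{cb}f_{ad}-f_{cd}f_{ab}\)^{p-1}$ and multiply by the common factor $f_{ab}^{p-1}$, tracking the exponents $p+i-1\geq p$, you recover the statement that is actually needed. So: your proof is correct for the literal wording, but there is a genuine gap relative to the result the paper proves and uses.
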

	\begin{proof}
		By the Pl{\"u}cker relations, we have that
			\begin{align*}
				f_{ca}^{p-1}f_{ab}^{p-1}f_{bd}^{p-1}&=f_{ab}^{p-1}(f_{cb}f_{ad}-f_{cd}f_{ab})^{p-1}\\
													&=f_{ab}^{p-1}\sum_{i=0}^{p-1}\binom{p-1}{i}(-1)^i f_{cb}^{p-i-1} f_{ad}^{p-i-1} f_{cd}^if_{ab}^i\\
													&=\sum_{i=0}^{p-1}(-1)^i\binom{p-1}{i} f_{cb}^{p-i-1} f_{ad}^{p-i-1} f_{cd}^i f_{ab}^{p+i-1}.
			\end{align*}			 
		By assumption $f_{ab}\in\jg$, so all the terms of the sum with $i>0$ are contained in $\jg^{[p]}$. This gives
			\begin{align*}
				f_{v_1,\ldots,c,a,b,d,\ldots,v_s}&\equiv f_{v_1v_2}^{p-1}\cdots f_{ca}^{p-1}f_{ab}^{p-1}f_{bd}^{p-1}\cdots f_{v_{n-1}v_n}^{p-1}\\
												&\equiv f_{v_1v_2}^{p-1}\cdots f_{cb}^{p-1} f_{ad}^{p-1} f_{ab}^{p-1}\cdots f_{v_{n-1}v_n}^{p-1}\,\\
												&\equiv f_{v_1,\ldots,c,b,a,d,\ldots,v_s}
			\end{align*}
		modulo $\jg$.
	\end{proof}
	\begin{theorem}\label{sgrad}
		Let $G$ be a simple  connected closed graph which is not the complete graph and let $S:=K[x_1,\ldots,x_n,y_1,\ldots,y_n]$ and $\mm$ be the maximal homogeneous ideal. Then, $S/\jg$ is $F$-pure, and
			\begin{equation*}
				\min\left\lbrace s\in\NN\middle|\left[\frac{\jg^{[p]}:\jg+\mm^{[p]}}{\mm^{[p]}}\right]_s\neq0\right\rbrace\leq 2(n-1)(p-1).
			\end{equation*}
	\end{theorem}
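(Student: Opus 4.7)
The plan is to apply Fedder's Criterion (Lemma~\ref{fed}) by producing an explicit element of $(\jg^{[p]}:\jg)\setminus\mm^{[p]}$ of degree $2(n-1)(p-1)$. Such an element simultaneously establishes the $F$-purity of $S/\jg$ and the claimed bound. My candidate is
\[
f := f_{1,2,\ldots,n} = f_{12}^{p-1} f_{23}^{p-1} \cdots f_{n-1,n}^{p-1},
\]
a homogeneous polynomial of degree $2(n-1)(p-1)$. The factor $f_{k,k+1}^{p-1}$ is available for every $k$ because, under the identification of connected closed graphs with connected proper interval graphs, all consecutive pairs $\{k,k+1\}$ belong to $E(G)$.

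To prove $f\in\jg^{[p]}:\jg$ it suffices to check $f\cdot f_{ij}\in\jg^{[p]}$ for each generator $f_{ij}$ with $\{i,j\}\in E(G)$ and $i<j$. The case $j=i+1$ is immediate since $f$ already contains the factor $f_{i,i+1}^{p-1}$, producing an $f_{i,i+1}^{p}$ in the product. For $j>i+1$ I would use the interval edge property of proper interval graphs: the hypothesis $\{i,j\}\in E(G)$ forces $\{k,j\}\in E(G)$ for every $i\leq k<j$. These edges allow me to apply Proposition~\ref{f1} iteratively, migrating the entry $j$ leftward past $j-1,j-2,\ldots,i+1$ in the sequence $(1,2,\ldots,n)$. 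Inspecting the proof of Proposition~\ref{f1}, the congruence produced there is in fact modulo $\jg^{[p]}$ (the discarded summands of shape $f_{ab}^{p+i-1}$ with $i\geq 1$ visibly lie in $\jg^{[p]}$), so iterating gives
\[
f \equiv f_{1,2,\ldots,i,\,j,\,i+1,\ldots,j-1,\,j+1,\ldots,n} \pmod{\jg^{[p]}}.
\]
The right-hand side contains $f_{ij}^{p-1}$ as a factor (using $f_{ji}^{p-1}=f_{ij}^{p-1}$ in characteristic $p$), so multiplying by $f_{ij}$ produces $f_{ij}^p$ as a factor and places $f\cdot f_{ij}$ in $\jg^{[p]}$.

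To show $f\notin\mm^{[p]}$ I would exhibit a single monomial in its expansion with every variable of exponent at most $p-1$. Expanding each $f_{k,k+1}^{p-1}=(x_ky_{k+1}-x_{k+1}y_k)^{p-1}$ and selecting the $(x_ky_{k+1})^{p-1}$ summand in every factor yields the monomial
\[
x_1^{p-1} x_2^{p-1}\cdots x_{n-1}^{p-1}\,y_2^{p-1} y_3^{p-1}\cdots y_n^{p-1}.
\]
A brief exponent-tracking argument on $x_k$ and $y_k$ for $1\leq k\leq n$ shows this monomial arises from only this choice of summands, so it survives in the expansion with coefficient $1$; in particular $f\notin\mm^{[p]}$. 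Fedder's Criterion then gives $F$-purity of $S/\jg$, and $f$ contributes a nonzero class of degree $2(n-1)(p-1)$ in $(\jg^{[p]}:\jg+\mm^{[p]})/\mm^{[p]}$. The main technical hurdle is verifying the interval edge property used during the swap argument; this reduces to the structural identification of connected closed graphs with proper interval graphs, after which the remainder is routine bookkeeping with Proposition~\ref{f1} and the explicit monomial inspection.
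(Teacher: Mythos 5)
Your proposal is correct and follows essentially the same route as the paper: Fedder's criterion applied to the element $f_{1,2,\ldots,n}$ of degree $2(n-1)(p-1)$, iterated Pl\"ucker swaps via Proposition~\ref{f1} (correctly noted to hold modulo $\jg^{[p]}$) using the interval edge property of the closed labelling, and exhibiting the monomial $x_1^{p-1}\cdots x_{n-1}^{p-1}y_2^{p-1}\cdots y_n^{p-1}$ to see that $f\notin\mm^{[p]}$. The only cosmetic differences are that you treat the cases $j<n$ and $j=n$ uniformly where the paper splits them, and you verify $f\notin\mm^{[p]}$ by direct monomial inspection rather than via the lex initial term.
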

	\begin{proof}
		By the Fedder's criterion (Lemma \ref{fed}), it suffices to show that 
		\begin{equation*}
		f_{1,2,\ldots,n}\in (\jg^{[p]}:\jg)\backslash \mm^{[p]}.
		\end{equation*}
		First we prove that $f_{1,\ldots,n}\notin\mm$. Let $<$ the lexicographic on $S$ induced by $x_1>\cdots>x_n>y_1>\cdots>y_n$. Then 
		\begin{equation*}
			\ini( f_{1,\ldots,n})=x_1^{p-1}\cdots x_{n-1}^{p-1}y_2^{p-1}\cdots y_n^{p-1}\notin\mm^{[p]}.
		\end{equation*}
		
		Next we show that $f_{1,\ldots,n}\in \jg^{[p]}:\jg$. It is enough to show that $f_{1,\ldots,n}f_{ij}\in\jg^{[p]}$ for all $\{i,j\}\in E(G)$.		
		We assume that $\{i,j\}\in E(G)$. If $j=i+1$, then $f_{1,\ldots,n}f_{ij}\in\jg^{[p]}$. We assume that $j>i+1$.		
		If $j\neq n$, then $\{k,j\}\in E(G)$  for all $k\in\{i+1,\ldots,j-1\}$  \cite[Propsition 1.3]{M17}. Hence, by using repeatedly Proposition \ref{f1}, we obtain
		\begin{equation*}
					f_{1,2,\ldots,i,i+1,\ldots,j-1,j,j+1,\ldots,n}\equiv f_{1,2,\ldots,i,j,i+1,\ldots,j-1,j+1,\ldots,n}\,\mod\jg^{[p]}.
		\end{equation*}
		Since $f_{i,j}^{p-1}$ is a factor of the last expression, we have that $f_{1,\ldots,n}\in \jg^{[p]}:\jg$.		
		
		 If $j=n$, then $i\neq 1$. For if $\{1,n\}\in E(G)$, then $G$ is complete \cite[Proposition 1.3]{M17}.
		By iterating Proposition \ref{f1},
		\begin{equation*}
			f_{1,2,\ldots,i,i+1,\ldots,j-1,j,j+1,\ldots,n}\equiv f_{1,2,\ldots,i-1,i+1,\ldots,i,n}\,\mod\jg^{[p]}.
		\end{equation*}
		Then $f_{in}^{p-1}$ is a factor of the last expression, and $f_{1,\ldots,n}\in \jg^{[p]}:\jg$.
	\end{proof}
	\begin{corollary}
		Let $G$ be a closed graph, let $S:=K[x_1,\ldots,x_n,y_1,\ldots,y_n]$ and let $\mm$ be the maximal homogeneous ideal. Then, $\fpt(S/\jg)=2$.
	\end{corollary}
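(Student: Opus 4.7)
The plan is to prove $\fpt(S/\jg)=2$ by sandwiching from below and above; I take $G$ to be connected (for disconnected closed graphs the statement reduces component-by-component via the tensor-product formula for $\fpt$ established earlier in Section 3).

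For the lower bound $\fpt(S/\jg)\geq 2$, I would upgrade the witness of Theorem \ref{sgrad} from exponent $p-1$ to exponent $p^e-1$. Setting
$$f^{(e)}_{1,\ldots,n}:=f_{12}^{p^e-1}f_{23}^{p^e-1}\cdots f_{n-1,n}^{p^e-1},$$
the Pl\"ucker manipulation of Proposition \ref{f1} extends verbatim once one notes that the binomial expansion of $(f_{ca}f_{bd})^{p^e-1}=(f_{cb}f_{ad}-f_{cd}f_{ab})^{p^e-1}$ has every non-leading term divisible by $f_{ab}^{p^e-1+i}$ with $i\geq 1$, hence by $f_{ab}^{p^e}\in\jg^{[p^e]}$. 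Re-running the case analysis of Theorem \ref{sgrad}---together with a parallel argument for $G=K_n$, where every pair is an edge and the restriction on $\{1,n\}$ becomes vacuous---one shows $f^{(e)}_{1,\ldots,n}\in(\jg^{[p^e]}:\jg)\setminus\mm^{[p^e]}$, an element of degree $2(n-1)(p^e-1)$. Lemma \ref{mins}, applied with $2n$ in place of $n$, then gives
$$b_\mm(p^e)\geq 2n(p^e-1)-2(n-1)(p^e-1)=2(p^e-1),$$
and the limit formula for $\fpt$ yields $\fpt(S/\jg)\geq 2$ upon letting $e\to\infty$.

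For the upper bound $\fpt(S/\jg)\leq 2$, I would invoke Theorem \ref{comp}. For a connected graph $G$, the ideal $I_2(X)$ of all $2\times 2$ minors of the generic $2\times n$ matrix with rows $(x_1,\ldots,x_n)$ and $(y_1,\ldots,y_n)$ is a minimal prime of $\jg$---the prime $P_\emptyset(G)$ associated to the empty cut set in the classification of \cite{HHHKR10}---so $\fpt(S/\jg)\leq\fpt(S/I_2(X))$. The ring $S/I_2(X)$ is the homogeneous coordinate ring of the Segre embedding $\mathbb{P}^1\times\mathbb{P}^{n-1}$, whose $F$-pure threshold is the classical value $2$. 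Combining the two bounds completes the proof. The main technical obstacle is the extension of Proposition \ref{f1} from exponent $p-1$ to exponent $p^e-1$; once this is in hand, the remainder is a routine reapplication of Lemma \ref{mins}, the limit formula, and Theorem \ref{comp}.
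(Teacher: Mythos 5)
Your overall strategy is the paper's: produce a Fedder-type witness of degree $2(n-1)(p^e-1)$ in $(\jg^{[p^e]}:\jg)\setminus\mm^{[p^e]}$, convert it into the bound $b_\mm(p^e)\geq 2(p^e-1)$ via Lemma \ref{mins}, pass to the limit, and get the upper bound from Theorem \ref{comp} applied to the minimal prime $\mathcal{J}_{K_n}=I_2(X)$, whose $F$-pure threshold is $2$. The only methodological difference is how the exponent is raised from $p-1$ to $p^e-1$: you rerun the Pl\"ucker computation of Proposition \ref{f1} with exponent $p^e-1$ (which is fine, since the terms with $i\geq 1$ carry $f_{ab}^{p^e-1+i}\in\jg^{[p^e]}$), whereas the paper proves a general bootstrap by induction on $e$: if $f^{p-1}\in(I^{[p]}:I)\setminus\mm^{[p]}$ then $f^{p^e-1}\in(I^{[p^e]}:I)\setminus\mm^{[p^e]}$, using colon-ideal manipulations. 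Both work; the paper's lemma is reusable, yours is a direct recomputation.

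There is, however, a genuine gap in your treatment of the complete graph. Your claim that "a parallel argument for $G=K_n$" works because "the restriction on $\{1,n\}$ becomes vacuous" misreads why that restriction is there: it is not about the edge $\{i,j\}$ being present, but about the mechanics of Proposition \ref{f1}, whose Pl\"ucker identity needs four distinct indices $c,a,b,d$, i.e.\ a flanking entry on each side of the transposed pair. To make $f_{1n}^{p^e-1}$ appear from the sequence $1,2,\ldots,n$ you must at some point transpose at an end of the sequence (move $1$ off position one or $n$ off position $n$), and such boundary swaps are not covered by Proposition \ref{f1}; this is exactly why Theorem \ref{sgrad} excludes $K_n$ and why the paper handles the complete graph separately, citing the determinantal computation $\fpt(S/\mathcal{J}_{K_n})=2$. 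The fix is immediate and already in your own argument: for $G=K_n$ one has $\jg=I_2(X)$, the Segre coordinate ring you invoke for the upper bound, so $\fpt=2$ outright. Separately, your parenthetical reduction for disconnected closed graphs is incorrect: by the tensor-product proposition in Section 3 the $F$-pure threshold adds over connected components, so a closed graph with $c\geq 2$ components has $\fpt(S/\jg)=2c$, not $2$; the statement must be read (as in Theorem \ref{sgrad}) for connected $G$.
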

	\begin{proof}
		If $G$ is complete, then $S/\jg$ is determinantal, and so, $\fpt(S/\jg)=2$ \cite[Proposition 4.3]{STV}.
		First we prove by induction on $e$ that if $f^{p-1}\in(I^{[p]}:I)\backslash\mm^{[p]}$, then $f^{p^e-1}\in(I^{[p^e]}:I)\backslash\mm^{[p^e]}$. 
		
		The step base follows from our assumptions.
		
		For $f^{p^e-1}\notin\mm^{[p^e]}$ we have that 
		\begin{align*}
			(\mm^{[p^e]}:f^{p^e-1})\subseteq\mm&\Rightarrow(\mm^{[p^{e+1}]}:f^{p^{e+1}-p})\subseteq\mm^{[p]}\\
			&\Rightarrow((\mm^{[p^{e+1}]}:f^{p^{e+1}-p}):f^{p-1})\subseteq(\mm^{[p]}:f^{p-1})\subseteq\mm\\
			&\Rightarrow(\mm^{[p^{e+1}]}:f^{p^{e+1}-1})\subseteq\mm.
		\end{align*}
		This means that $f^{p^{e+1}-1}\notin\mm^{[p^{e+1}]}$. 
		
		If $f^{p^e-1}\in I^{[p^e]}:I$, Then,
		\begin{align*}
			f^{p-1}I\subseteq I^{[p]}&\Rightarrow(f^{p-1}I)^{[p^e]}\subseteq I^{[p^{e+1}]}\\
			&\Rightarrow f^{p^{e+1}-p^e}I^{[p^e]}\subseteq I^{[p^{e+1}]}\\
			&\Rightarrow f^{p^{e+1}-p^e}(f^{p^e-1}I)\subseteq f^{p^{e+1}-p^e}(I^{[p^e]})\subseteq I^{[p^{e+1}]}\\
			&\Rightarrow f^{p^{e+1}-1}I\subseteq I^{[p^{e+1}]},
		\end{align*}
		thus $f^{p^{e+1}-1}\in I^{[p^{e+1}]}:I$.		
		This means that $(x_1x_2\cdots x_{n-1}y_2\cdots y_n)^{p^e-1}\in(\jg^{[p^e]}:\jg)\backslash \mm^{[p^e]}$		
		Using Lemma \ref{mins}, we deduce that 
			\begin{align*}
				2n(p^e-1)-b_\mm(p^e)&\leq 2(n-1)(p^e-1)\\
				-b_\mm(p^e)&\leq -2(p^e-1)\\
				\frac{b_\mm(p^e)}{p^e}&\geq \frac{2(p^e-1)}{p^e}\\
				\fpt(S/\jg)=\lim_{e\to\infty}\frac{b_\mm(p^e)}{p^e}&\geq \lim_{e\to\infty}\frac{2(p^e-1)}{p^e}=2.
			\end{align*}
			Since $\mathcal{J}_{K_n}$ is a minimal prime over $\jg$, the reverse inequality is a consequence of \ref{comp} and the fact that $\fpt{\mathcal{J}_{k_n}}=2$.
	\end{proof}
	
 	\begin{proposition}\label{fptini}
 		Let $G$ be a connected graph on $[n]$. Then, $x_n$ and $y_1$ are the only variables that do not appear in the minimal generating set of $\ini(\jg)$. 
 	\end{proposition}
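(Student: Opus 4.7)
My plan is to read off the minimal generators of $\ini(\jg)$ from Remark \ref{bin}: they are the monomials $u_\pi\, x_i\, y_j$ attached to admissible paths $\pi\colon i = i_0, i_1, \ldots, i_r = j$ with $i < j$, where $u_\pi = \prod_{i_k > j} x_{i_k} \cdot \prod_{i_\ell < i} y_{i_\ell}$. A variable $x_v$ appears in some minimal generator iff $v$ is the smaller endpoint of an admissible path or an intermediate vertex exceeding the larger endpoint; symmetrically for $y_v$. The proof thus splits into (a) showing every $x_i$ with $i < n$ and every $y_j$ with $j > 1$ occurs, and (b) showing that $x_n$ and $y_1$ do not.

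For (a), fix $i < n$ and use connectedness to produce a walk from $i$ to $n$; letting $j$ be the first vertex $> i$ encountered along this walk, the initial segment $i, v_1, \ldots, v_{k-1}, j$ has all interior vertices $< i$ by the minimality of $j$, so admissibility conditions (i) and (ii) are automatic, and (iii) is obtained by replacing it with a shortest such sub-path. Then $x_i$ occurs in the leading term $u_\pi x_i y_j$ of this admissible path. Reversing the labelling (work with $n+1-v$ in place of $v$) gives the analogous statement for $y_j$ with $j > 1$.

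For (b), I would argue by contradiction: if $y_1$ appears it must come from an intermediate vertex $1 = i_t$ in some admissible path $\pi\colon i, \ldots, j$ with $i > 1$; let $a = i_{t-1}$ and $b = i_{t+1}$, so $\{a,1\}, \{1,b\} \in E(G)$. The crucial step is to produce the edge $\{a,b\}$: once this is in hand, the sequence obtained from $\pi$ by deleting $i_t = 1$ is a path from $i$ to $j$ indexed by a proper subset of the intermediates, contradicting admissibility condition (iii). The symmetric argument with $n$ in the role of $1$ then disposes of $x_n$.

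The main obstacle is precisely the edge-production step in (b). Pure connectedness of $G$ is not enough---the star centred at vertex $1$ supports admissible paths $v, 1, w$ with $1$ intermediate, and $y_1$ genuinely appears in $\ini(\jg)$---so the argument must invoke additional structure of $G$, most naturally the closedness that governs the rest of this section: when $G$ is closed, vertex $1$ being a common smaller neighbour of $a$ and $b$ forces $\{a, b\} \in E(G)$ directly from the closedness condition, closing the argument.
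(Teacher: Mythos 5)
Your positive half (a) is essentially the paper's argument, in slightly different clothing: the paper takes a shortest path from $i$ to $i+1$ (with consecutive endpoints condition (ii) is automatic, and minimality gives (iii)), so that both $x_i$ and $y_{i+1}$ divide the resulting generator $u_\pi x_i y_{i+1}$; your walk-to-$n$ construction plus the relabelling symmetry accomplishes the same thing, using connectedness in the same way.

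The substantive point is your part (b), and there you have caught a genuine error rather than left a gap. The paper's proof of the negative half is the single unproved assertion that $x_n$ and $y_1$ appear in none of the monomials $u_\pi x_i y_j$, and your star example refutes exactly this assertion, hence the proposition as stated: for $E(G)=\{\{1,2\},\{1,3\}\}$ the path $2,1,3$ is admissible, $y_1 f_{23}$ lies in the reduced Gr\"obner basis of Theorem \ref{mainresult}, and by Remark \ref{bin} the monomial $x_2y_1y_3$ is a minimal generator of $\ini(\jg)$ involving $y_1$; the mirror labelling $E(G)=\{\{1,3\},\{2,3\}\}$ gives the minimal generator $x_1x_3y_2$ involving $x_n$. (Both examples are paths with a non-closed labelling, so the failure is not exotic.) Your diagnosis of what is needed is also correct: if $G$ is closed with respect to the given labelling and $1$ is an interior vertex of an admissible path with neighbours $a,b$ in the path, then the edges $\{1,a\},\{1,b\}$ have common smaller endpoint $1$, closedness yields $\{a,b\}\in E(G)$, and deleting $1$ produces a path on a proper subset of the interior vertices, contradicting condition (iii); the dual argument with common larger endpoint $n$ handles $x_n$. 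Under closedness one can even shortcut this: the Gr\"obner basis is the set of quadrics $f_{ij}$, so every minimal generator of $\ini(\jg)$ has the form $x_iy_j$ with $i<j$ an edge, and neither $x_n$ nor $y_1$ can occur. Note that the error propagates to the corollary $\fpt(S/\ini(\jg))=2$: by Proposition \ref{fptsfm} the examples above give the value $1$, so that statement too should carry the closed-labelling hypothesis; the remark that follows is stated for closed graphs anyway, and the main Gorenstein theorem does not use these results, so the repair is harmless to the rest of the paper.
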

 	\begin{proof}
 		The set of monomials
 		\begin{equation*}
 			\mathcal{H}=\bigcup_{i<j}\{u_\pi x_iy_j\,:\,\pi \textit{ is an admisible path from $i$ to $j$ }\}
 		\end{equation*}
 		generates $\ini(\jg)$ by Remark \ref{bin}. In every of this monomials, the variables $x_n$ and $y_1$ do not appear. We show now that they are the only variables with this property. Let $i\in\{1,\ldots,n-1\}$ and let $\pi:i=i_0,i_1,\ldots,i_{r-1},i_r=i+1$ be a path of minimal length from $i$ to $i+1$, then for each $k=i_1,\ldots,i_{r-1}$, $i_k<i$ or $i_k>i+1$, and by the minimality of $\pi$, for any proper subset $\{j_1,\ldots,j_s\}$ of $\{i_1,\ldots,i_{r-1}\}$, $i=i_0,j_1,\ldots,j_s,i_r=i+1$ is not a path. Hence $\pi$ is admisible, so $x_i$ and $x_i+1$ appear in $\mathcal{H}$ for $i\in\{1,\ldots,n-1\}$.
 	\end{proof}
 	\begin{corollary}
 		Let $G$ be a connected graph on $[n]$, then $\fpt(R/\ini(\jg))=2$.
 	\end{corollary}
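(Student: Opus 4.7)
The plan is to obtain this as an immediate corollary of Proposition \ref{fptsfm} together with Proposition \ref{fptini}; the symbol $R$ in the statement refers to the ambient polynomial ring $S=K[x_1,\ldots,x_n,y_1,\ldots,y_n]$, which has $2n$ variables.

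First I would verify that $\ini(\jg)$ is a square-free monomial ideal, so that Proposition \ref{fptsfm} applies. By Remark \ref{bin}, the minimal generating set of $\ini(\jg)$ consists of the monomials $u_\pi x_i y_j$ as $\pi$ ranges over admissible paths from $i$ to $j$. Unwinding the definition of $u_\pi$ and using clause (i) in the definition of admissibility (the vertices $i=i_0,i_1,\ldots,i_r=j$ are pairwise distinct), each such monomial $u_\pi x_i y_j$ is square-free: every $x$-variable that appears corresponds to some $i_k>j$, every $y$-variable to some $i_\ell<i$, and the distinguished factors $x_i$ and $y_j$ are disjoint from these.

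With square-freeness in hand, Proposition \ref{fptsfm} tells us that $\fpt(S/\ini(\jg))$ equals the number of variables among $x_1,\ldots,x_n,y_1,\ldots,y_n$ that do not occur in any minimal generator of $\ini(\jg)$. Proposition \ref{fptini} identifies this set of absent variables precisely as $\{x_n, y_1\}$, which has cardinality two. Combining the two statements yields $\fpt(S/\ini(\jg))=2$.

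I do not foresee any real obstacle: both propositions are already in hand, and the only sub-step is the routine square-freeness check above. If anything, one might want to briefly note why no relation among the generators in $\mathcal{G}$ could reintroduce $x_n$ or $y_1$ into the minimal generators of $\ini(\jg)$, but this is already handled by Proposition \ref{fptini}, whose proof exhibits admissible paths between consecutive vertices that force each $x_i$ with $i\neq n$ and each $y_j$ with $j\neq 1$ to appear.
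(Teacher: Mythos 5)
Your proposal is correct and follows the paper's own argument exactly: the paper also deduces the corollary immediately from Proposition \ref{fptsfm} and Proposition \ref{fptini}, with the square-freeness of $\ini(\jg)$ already guaranteed by the square-free Gr\"obner basis of Theorem \ref{mainresult}. The extra verification you include is fine but routine.
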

 	\begin{proof}
 		This is follows from Theorem \ref{fptsfm} and Proposition \ref{fptini}.
 	\end{proof}
	\begin{remark}
		Let $G$ be closed graph and $S:=K[x_1,\ldots,x_n,y_1,\ldots,y_n]$. Then,
			\begin{equation*}
				\fpt(S/\jg)= \fpt(S/\ini(\jg)).
			\end{equation*}
	\end{remark}
\section{Gorenstein Binomial Edge Ideals} \label{GBEA}

In this section we prove our main result. We start with a preparation theorem regarding $F$-injectivity of square Gr\"{o}bner deformations. We first need to introduce notation.

\begin{notation}\label{Notation}
Let $S=K[x_1,\ldots,x_n]$ be a polynomial ring over a field with maximal homogeneous ideal $\m$.
Let $I$ be an ideal and $<$ a monomial order such that $\IN(I)$ is square-free.
There exists a vector $w\in\NN^n$ such that $\IN_<(I)=\IN_w(I)$ \cite[Proposition 1.11]{S95}.
Let $A=K[t]$ be a polynomial ring, $L=\FRAC(A)$, and $T=A\otimes_K S$.
We set $J=\HOM_w(I)\subseteq T$ the homogenization of $I$, $R=T/J$, and  $\overline{R}=R/xR.$
\end{notation}

\begin{remark}\label{RemNotation}
Under Notation  \ref{Notation}, it is well known that
\begin{enumerate}
\item $A\to R$ is flat;
\item $R/tR =S/\IN_<(I)$;
\item $R/(t-a)R =S/I$ for every $a\in K\setminus \{0\}$;
\item $R\otimes_A L =S/I\otimes_K L$;
\end{enumerate}
\end{remark}

The following result was obtained independently and simultaneously by Varbaro and  Koley \cite{VK}.

\begin{theorem}\label{ThmFinj}
Let $S=K[x_1,\ldots,x_n]$ be a polynomial ring over a field, $K$, of prime characteristic.
Let $I$ be an ideal and $<$ a monomial order such that $\IN_<(I)$ is square-free.
Then, $S/I$ is $F$-injective.
\end{theorem}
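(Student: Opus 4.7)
The plan is to exploit the Gröbner degeneration machinery from Notation~\ref{Notation} and Remark~\ref{RemNotation}. Set $A=K[t]$, $T=A\otimes_K S$, $J=\HOM_w(I)$, $R=T/J$; then $A\to R$ is flat with special fiber $R/tR=S/\IN_<(I)$ and generic fiber $R\otimes_A L=(S/I)\otimes_K L$, and $t$ is a non-zerodivisor on $R$.

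First I would check that the special fiber $S/\IN_<(I)$ is $F$-pure. Because $\IN_<(I)$ is square-free, the monomial $\prod_{i=1}^n x_i^{p-1}$ witnesses Fedder's criterion (Lemma~\ref{fed}): it lies in $\IN_<(I)^{[p]}:\IN_<(I)$ but not in $\m^{[p]}$. Hence $S/\IN_<(I)$ is $F$-pure, and in particular $F$-injective. Next, I would transfer $F$-injectivity from the special fiber to the total space $R$ localized at the maximal homogeneous ideal $\mathfrak{M}=(x_1,\ldots,x_n,t)$, using a deformation principle for $F$-injectivity along the flat local extension $A_{(t)}\to R_\mathfrak{M}$: the closed fiber $R/tR$ is $F$-injective, $t$ is regular on $R$, and the Frobenius action on $H^i_\mathfrak{M}(R)$ is compared to that on $H^i_\mathfrak{m}(R/tR)$ via the long exact sequence induced by multiplication by $t$. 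From $F$-injectivity of $R_\mathfrak{M}$, inverting $t$ gives that $R\otimes_A L=(S/I)\otimes_K L$ is $F$-injective.

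Finally, I would descend from $(S/I)\otimes_K L$ to $S/I$. Since $K\to L$ is faithfully flat, local cohomology base-changes as $H^i_\m(S/I)\otimes_K L\cong H^i_{\m L}((S/I)\otimes_K L)$, and the Frobenius action is compatible with this base change. Thus the injectivity of Frobenius on the right forces injectivity on the left by faithful flatness, so $S/I$ is $F$-injective as required.

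The main obstacle is the middle step: $F$-injectivity is famously not preserved by arbitrary flat deformations, so one cannot invoke a black-box result. What rescues the argument here is the very rigid structure of the special fiber — namely that the Frobenius splitting of $S/\IN_<(I)$ is realized by the canonical monomial $\prod x_i^{p-1}$, which lifts unambiguously to $T$ — combined with the fact that $J$ is the $w$-homogenization of $I$, which gives strong control over $J^{[p]}:J$ modulo $t$. The delicate point is to parlay this into $F$-injectivity (rather than mere $F$-purity) of $R_\mathfrak{M}$; this is precisely the content that Varbaro and Koley establish independently.
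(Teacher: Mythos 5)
Your outer steps match the paper's: the special fiber $S/\IN_<(I)$ is $F$-pure (your Fedder-style argument with $\prod_i x_i^{p-1}$ is fine, and is equivalent to the paper's observation that it is a Stanley--Reisner ring), and your final descent from $(S/I)\otimes_K L$ to $S/I$ via flat base change of local cohomology and compatibility with Frobenius is essentially the paper's direct-summand/commutative-diagram argument. The problem is the middle step, which you yourself flag and then do not close. You correctly note that $F$-injectivity does not deform along a nonzerodivisor in general, but your proposed rescue --- lifting the splitting monomial to $T$ and ``strong control over $J^{[p]}:J$ modulo $t$'' --- is a sketch of a Fedder-type attempt to deform $F$-purity, which is neither what is needed nor valid as stated ($F$-purity also fails to deform in general), and you end by saying the delicate point ``is precisely the content that Varbaro and Koley establish independently.'' Since the Varbaro--Koley result \emph{is} this theorem, that is a circular citation rather than a proof: the key idea is missing.

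The paper closes exactly this gap with one additional notion: because $R/tR$ is $F$-pure, it is $F$-full (and $F$-injective) by \cite[Lemma 2.5]{SW}, and the deformation theorem of Ma--Quy \cite[Theorem 1.1]{MaQuy} says that $F$-injectivity \emph{together with} $F$-fullness does deform along the nonzerodivisor $t$; both properties then pass to the localization $R\otimes_A L=(S/I)\otimes_K L$, after which the descent step you describe finishes the argument. So the missing ingredient in your write-up is the $F$-fullness of the square-free special fiber and the corresponding conditional deformation statement; without it, the transfer from $S/\IN_<(I)$ to $(S/I)\otimes_K L$ is not justified. A secondary, smaller issue: arguing only at the single maximal ideal $\mathfrak{M}=(x_1,\ldots,x_n,t)$ and ``inverting $t$'' does not by itself give $F$-injectivity of the whole generic fiber; the paper avoids this by deforming the global statement for $R$ and then localizing.
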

\begin{proof}
We use Notation \ref{Notation} and the facts in Remark \ref{RemNotation} in this proof.
We have that $R/tR$ is an Stanley-Reisner ring, and so, $F$-pure. Since $F$-pure rings are $F$-full \cite[Lemma 2.5]{SW} and $F$-injective, $R/tR$ satisfies these properties. Since $t$ is a nonzero divisor, we have that $R$ is also $F$-full and $F$-injective \cite[Theorem 1.1]{MaQuy}. Then, $R\otimes_A L =S/I\otimes_K L$ are both $F$-injective and $F$-full, because these properties are preserved under localization.
We note that $S/I$ is a direct summand of $S/I\otimes_K L$. 
We note that $\m$ expands the maximal homogeneous ideal in $S/I\otimes_K L$. Then,
we have a commutative diagram

 \centerline{
\xymatrix{
H^i_\m(S/I) \ar[d]^{F_{S/I}} \ar[r]^\alpha & H^i_\m(S/I\otimes_K L)\ar[d]^{F_{S/I\otimes_K L}}\\
H^i_\m(S/I) \ar[r]^\alpha          & H^i_\m(S/I\otimes_K L)},
}
\noindent where $\alpha$ denotes the maps induced by the inclusion $S/I\to S/I\otimes_K L$.
Since the horizontal maps split, they are injective. Since $S/I\otimes_K L$ is $F$-injective, we have that 
$F_{S/I\otimes_K L}\circ \alpha =\alpha\circ F_{S/I}$ is injective. Hence, $F_{S/I}$ is injective, and the result follows.
\end{proof}

We are now ready to show our main result in prime characteristic.

	\begin{theorem}\label{gorp}
		Let $S:=K[x_1,\ldots,x_n,y_1,\ldots,y_n]$. Suppose that $\chara(K)=p>0$. Let $G$ be a connected graph such that $S/\jg$ is Gorenstein. Then, $G$ is a path.
	\end{theorem}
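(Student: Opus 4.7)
The plan is to combine the regularity formula $\reg_S(S/\jg) = \dim(S/\jg) - \fpt(S/\jg)$ of Theorem \ref{prop1} with the regularity bounds in Theorems \ref{regbei} and \ref{theo1}. To invoke Theorem \ref{prop1}, I need $S/\jg$ to be $F$-pure. Since the reduced Gr\"{o}bner basis described in Theorem \ref{mainresult} has squarefree leading terms (Remark \ref{bin}), the initial ideal $\ini(\jg)$ is squarefree, so Theorem \ref{ThmFinj} implies that $S/\jg$ is $F$-injective. For Gorenstein rings, $F$-injectivity is equivalent to $F$-purity (via Matlis duality applied to the Frobenius action on the top local cohomology), so the Gorenstein hypothesis upgrades $F$-injectivity to $F$-purity.

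Next I would pin down the two numerical invariants on the right-hand side. Since $S/\jg$ is Gorenstein, it is Cohen--Macaulay, and hence equidimensional. For connected $G$, the ideal $\mathcal{J}_{K_n}$ corresponding to the empty cut set is a minimal prime of $\jg$ with $\dim(S/\mathcal{J}_{K_n}) = n+1$, so equidimensionality forces $\dim(S/\jg) = n+1$. For the $F$-pure threshold, Theorem \ref{comp} applied to the minimal prime $\mathcal{J}_{K_n}$ of the $F$-pure ring $S/\jg$, together with the $F$-pure threshold of the $2 \times n$ generic determinantal ring computed in \cite[Proposition 4.3]{STV}, gives
\[
\fpt(S/\jg) \;\leq\; \fpt(S/\mathcal{J}_{K_n}) \;=\; 2.
\]

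Therefore $\reg_S(S/\jg) \geq (n+1) - 2 = n-1$. Since Theorem \ref{regbei} gives the reverse inequality $\reg_S(S/\jg) \leq n-1$, we conclude $\reg_S(S/\jg) = n-1$, and the contrapositive of Theorem \ref{theo1} forces $G$ to be a path. The most delicate point is the implication \emph{Gorenstein plus $F$-injective implies $F$-pure}; everything else is a clean assembly of the regularity formula of Theorem \ref{prop1} and the two comparison bounds for $F$-pure thresholds established in the earlier sections.
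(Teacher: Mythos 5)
Your proposal is correct and follows essentially the same route as the paper: square-free initial ideal gives $F$-injectivity via Theorem \ref{ThmFinj}, the Gorenstein hypothesis upgrades this to $F$-purity (the paper cites Fedder for exactly the Matlis-duality argument you sketch), and then Theorem \ref{prop1} together with $\dim(S/\jg)=n+1$ and $\fpt(S/\jg)\leq\fpt(S/\mathcal{J}_{K_n})=2$ yields $\reg_S(S/\jg)\geq n-1$, so Theorem \ref{theo1} forces $G$ to be a path. The only cosmetic difference is that you invoke Theorem \ref{regbei} to pin down the regularity exactly, which is not needed since the lower bound alone already contradicts Theorem \ref{theo1} for non-paths.
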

	\begin{proof}
By Theorem \ref{ThmFinj}, we have that  $S/\jg$ is $F$-injective. Since $S/\jg$ is Gorenstein, we have that $S/\jg$ is $F$-pure \cite[Lemma 3.3]{F83}.

		Since $G$ is connected, $\mathcal{J}_{k_n}$ is a minimal prime over $\jg$ \cite{HHHKR10} and its dimension is $n+1$. Then,
		\begin{equation}
			\reg(R/\jg)=\dim(R/\jg)-\fpt(R/\jg)\geq(n+1)-2=n-1
		\end{equation}
		where the inequality comes from the fact that if $I\subseteq J$ then $\fpt(\jg)\leq\fpt(\mathcal{J}_{K_n})=2$.
		Hence, $G$ is a path by Theorem \ref{theo1}.

	\end{proof}

In the previous result we estimate the regularity of $R/\jg$ using $F$-pure thresholds. We point out that
the extremal Betti numbers of $R/\jg$ and $R/\IN(\jg)$ coincide, in particular, 
 $\reg(R/\jg)=\reg(R/\IN(\jg))$ \cite[Corollary 2.7]{CV18}.

	We are now ready to prove the main result in this manuscript in  characteristic zero.
	
	\begin{theorem}
		Let $S:=K[x_1,\ldots,x_n,y_1,\ldots,y_n]$. Suppose that $\chara(K)=0$. Let $G$ be a connected graph such that $S/\jg$ is Gorenstein. Then, $G$ is a path.
	\end{theorem}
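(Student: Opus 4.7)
The plan is to reduce the characteristic-zero statement to the positive-characteristic result Theorem~\ref{gorp} by a standard reduction modulo~$p$ argument, exploiting that $\jg$ is already defined over $\ZZ$ with binomial generators of unit leading coefficient.

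First I would descend the Gorenstein hypothesis from $K$ down to $\QQ$. Since $\QQ\subseteq K$, the $\QQ$-algebra $R_\QQ:=\QQ[x_1,\ldots,x_n,y_1,\ldots,y_n]/\jg$ satisfies $S/\jg = R_\QQ\otimes_\QQ K$, and this is a faithfully flat extension. Localizing, for any $\pp\in\Spec R_\QQ$ and any prime $\qq$ of $S/\jg$ lying above $\pp$, the flat local map $(R_\QQ)_\pp\to(S/\jg)_\qq$ has fiber a localization of $\kappa(\pp)\otimes_\QQ K$, which is regular because every characteristic-zero field extension is separable. By descent of the Gorenstein property along flat local maps with Gorenstein fibers (Bruns--Herzog, Theorem~3.3.15), $R_\QQ$ is Gorenstein, so we may assume $K=\QQ$.

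Next I would spread out over $\ZZ$. By Theorem~\ref{mainresult}, the binomial edge ideal $\jg\subseteq\ZZ[x_1,\ldots,x_n,y_1,\ldots,y_n]$ has a Gröbner basis whose generators $u_\pi f_{ij}$ all carry leading coefficients $\pm 1$; therefore $A:=\ZZ[x_1,\ldots,x_n,y_1,\ldots,y_n]/\jg$ is free in every graded degree as a $\ZZ$-module, in particular flat over $\ZZ$, with $A\otimes_\ZZ\QQ = R_\QQ$ and $A\otimes_\ZZ\FF_p = \FF_p[x_1,\ldots,x_n,y_1,\ldots,y_n]/\jg$ for every prime $p$. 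The scheme $X:=\Spec A$ is excellent as a finitely generated $\ZZ$-algebra, so its Gorenstein locus $U\subseteq X$ is open. Since $X_\QQ\subseteq U$ by the previous step, the closed subset $X\setminus U$ is disjoint from the generic fiber, and its image under $\pi:X\to\Spec\ZZ$ is constructible by Chevalley's theorem and avoids the generic point $(0)$, hence is a finite set of closed points. Picking any prime $p$ outside this finite set yields $X_{(p)}\subseteq U$, so $\FF_p[x_1,\ldots,x_n,y_1,\ldots,y_n]/\jg$ is Gorenstein; Theorem~\ref{gorp} applied at this $p$ then forces $G$ to be a path.

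The two nontrivial invocations---descent of Gorensteinness along the faithfully flat extension $\QQ\subseteq K$, and openness of the Gorenstein locus for the excellent ring $A$---are both standard, so I do not anticipate a real obstacle here; the substantive work is already contained in Theorem~\ref{gorp}, and the characteristic-zero case follows by a formal spreading-out argument.
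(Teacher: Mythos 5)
Your argument is correct, and its overall shape is the same reduction-mod-$p$ strategy the paper uses: descend the Gorenstein hypothesis from $K$ to $\QQ$, pass to the $\ZZ$-model $\ZZ[x_1,\ldots,x_n,y_1,\ldots,y_n]/\jg$, show the fiber over $\FF_p$ is Gorenstein for almost all $p$, and invoke the prime-characteristic machinery. The differences are in the implementation, and they are worth noting. For the ``Gorenstein for $p\gg 0$'' step the paper simply cites \cite[Theorem 2.3.5]{HH}, whereas you rederive it from openness of the Gorenstein locus of the excellent ring $A$ together with Chevalley's theorem; this is a self-contained substitute, provided you add the one-line bridge that the paper's citation packages for free: knowing $A_\qq$ is Gorenstein at every point $\qq$ of the fiber over $(p)$ gives that the fiber ring $A\otimes_\ZZ\FF_p$ is Gorenstein only after observing that $p$ is a nonzerodivisor on $A_\qq$ (which your $\ZZ$-flatness supplies), so that $A_\qq/pA_\qq$ is Gorenstein. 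The second, more interesting divergence is the endgame: the paper transfers the regularity inequality $\reg\geq n-1$ from the proof of Theorem \ref{gorp} back to characteristic zero, using that regularity is insensitive to reduction mod $p\gg 0$, and then concludes via Theorem \ref{theo1}; you instead apply Theorem \ref{gorp} as a black box over $\FF_p$ and exploit that the conclusion ``$G$ is a path'' is purely combinatorial and field-independent, so no comparison of homological invariants across characteristics is needed. Your route is a bit cleaner on that point. Two small remarks: the flat-local descent of Gorensteinness from $S/\jg$ to $R_\QQ$ does not actually require the regularity of the fibers (Bruns--Herzog 3.3.15 gives descent unconditionally; the fiber hypothesis is only needed for ascent), and your claim that the Gr\"obner basis of Theorem \ref{mainresult} works over $\ZZ$ with unit leading coefficients, while true, could be replaced by the softer observation that generic freeness over $\ZZ[1/N]$ already suffices for the spreading-out argument.
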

	\begin{proof}
		Since field extensions do not affect whether a ring is Gorenstein, without loss of generality we can assume that $K=\QQ$.
		
	Let $A=\ZZ[x_1,\ldots,x_n,y_1,\ldots,y_n]$ and
		\begin{equation*}
			J=(x_iy_j-x_jy_i:\{i,j\}\in G\text{ and }i<j)A.
		\end{equation*}
		
	Then, 
		\begin{equation*}
			\reg_S(S/\jg)=\reg_{A\otimes_\ZZ \QQ}(A\otimes_\ZZ \QQ/J\otimes_\ZZ \QQ)=\reg_{A\otimes_\ZZ \FF_p}(A/J\otimes_\ZZ \FF_p)
		\end{equation*}
			and $A/J\otimes_\ZZ \FF_p$ is Gorenstein for $p\gg0$ \cite[Theorem 2.3.5]{HH}. Then,  $	\reg_S(S/\jg)\geq n-1$ from the proof of Theorem \ref{gorp}.
	Hence, $G$ is a path by Theorem \ref{theo1}.
	\end{proof}

\section*{Acknowledgments}
	I  thank Prof. Luis N\'uñez-Betancourt and Prof. Martha Takane for useful advice and suggestions on this project. I  thank Prof. Aldo Conca and Prof. Matteo Varbaro for pointing out a mistake in an earlier version of this manuscript. I thank Prof. Alessandro Di Stefani for helpful discussions. Finally, I thank the anonymous  referee for helpful comments, and constructive remarks on this manuscript.

\bibliographystyle{alpha}

\bibliography{Refs}

\end{document}